\documentclass[onefignum,onetabnum]{siamart250211}

\usepackage{lipsum}
\usepackage{amsfonts}
\usepackage{graphicx}
\usepackage{epstopdf}
\usepackage{algorithmic}
\ifpdf
  \DeclareGraphicsExtensions{.eps,.pdf,.png,.jpg}
\else
  \DeclareGraphicsExtensions{.eps}
\fi

\newsiamremark{remark}{Remark}
\newsiamremark{hypothesis}{Hypothesis}
\crefname{hypothesis}{Hypothesis}{Hypotheses}
\newsiamthm{claim}{Claim}
\newsiamremark{fact}{Fact}
\crefname{fact}{Fact}{Facts}

\headers{Geometric Generalization of Neural Operators}{}

\title{Geometric Generalization of Neural Operators from a Kernel Integral Perspective\thanks{Submitted to the editors DATE. The authors are in alphabetical order.
\funding{We acknowledge funding support from National Key R\&D Program of China 2025YFA1018700, 
National Natural Science Foundation of China (No.62595771, 12471403, and 12288101), Beijing Natural Science Foundation (No. QY25089), and the Fundamental Research Funds for the Central Universities of China. }}}

\author{
Mingyu Han\thanks{School of Mathematical Sciences, Peking University, Beijing, China (\email{hanmy2023@stu.pku.edu.cn},\email{wyhan@stu.pku.edu.cn},\email{yanshu@stu.pku.edu.cn},\email{jiayi22@stu.pku.edu.cn}).}
\and
Daniel Zhengyu Huang\thanks{Corresponding author. Beijing International Center for Mathematical Research,  Center for Machine Learning Research, Peking University, Beijing, China (\email{huangdz@bicmr.pku.edu.cn}).}
\and
Yuhan Wang\footnotemark[2] 
\and
Yanshu Zhang\footnotemark[2]
\and
Jiayi Zhou\footnotemark[2]
}

\usepackage{amsopn}

\ifpdf
\hypersetup{
  pdftitle={Neural Operator},
  pdfauthor={}
}
\fi

\usepackage{appendix}
\usepackage{amssymb}
\usepackage{subfig}
\usepackage{graphicx}
\usepackage{color}
\usepackage{bm}
\usepackage{enumitem}
\usepackage{bbm}
\usepackage{diagbox} 
\usepackage{makecell} 
\usepackage[normalem]{ulem} 
\usepackage{booktabs}
\usepackage{multirow}
\usepackage{enumitem}

\newcommand{\calO}{\mathcal{O}}
\newcommand{\calG}{\mathcal{G}}
\newcommand{\calD}{\mathcal{D}}
\newcommand{\calK}{\mathcal{K}}
\newcommand{\calL}{\mathcal{L}}

\newcommand{\calP}{\mathcal{P}}
\newcommand{\calQ}{\mathcal{Q}}
\newcommand{\calJ}{\mathcal{J}}
\newcommand{\bK}{\mathbb{K}}
\newcommand{\bR}{\mathbb{R}}
\newcommand{\bZ}{\mathbb{Z}}
\newcommand{\dd}{\mathrm{d}}
\newcommand{\bE}{\mathbb{E}}

\begin{document}

\maketitle

\begin{abstract}
Neural operators are neural network-based surrogate models for approximating partial differential equation solution operators, enabling efficient many-query computations in science and engineering when low-to-moderate accuracy is sufficient. Many applications, including engineering design, involve variable and often nonparametric geometries, for which generalization to unseen shapes remains a central practical challenge.
In this work, we adopt a kernel-integral perspective motivated by classical boundary integral formulations and study operator learning on variable geometries through the approximation of geometry-dependent kernel operators, including singular kernels. This perspective clarifies a mechanism for geometric generalization for fixed linear operators and reveals a direct connection between operator learning and fast kernel summation methods. Leveraging this connection, we propose a multiscale point cloud neural operator inspired by Ewald summation, combining Fourier long-range interactions with local geometry-aware corrections. 
We further establish approximation guarantees for the resulting multiscale representation of linear operators defined by singular kernels.
Numerical experiments demonstrate robust generalization across diverse geometries for several commonly used kernels, Laplace-type boundary integral maps, and a large-scale three-dimensional nonlinear vehicle flow example.
\end{abstract}

\begin{keywords}
  Partial differential equations, Surrogate modeling, Neural networks, Boundary integral methods, Singular kernels
\end{keywords}

\begin{AMS}
65N38, 68T07, 65N80
\end{AMS}

\section{Introduction}
This paper develops neural network–based surrogate models for partial differential equations (PDEs) defined on variable geometries. Such surrogates are useful in settings that require rapid repeated PDE evaluations across changing domains. Representative applications include engineering design~\cite{amsallem2015design,li2022machine,luo2025efficient,shen2025vortexnet}, where fast screening of candidate geometries is essential, and biomedical applications~\cite{gao2021phygeonet,yin2024dimon,zhou2024ai,guo2025warm}, where a single model is expected to generalize across patient-specific anatomies (e.g., blood flow and tissue deformation). In such many-query settings, the offline costs of data generation and model training can be amortized over a large number of comparatively inexpensive online evaluations.

We consider PDE-induced operators of the form
\begin{align}
\label{eq:G-continuous-intro}
    \mathcal{G}^{\dagger} : (f, \calD) \mapsto u,
\end{align}
where $\calD$ denotes the geometry on which the input and output fields are
represented, $f:\calD\rightarrow\bR^{d_f}$ denotes the prescribed input
data, and $u:\calD\rightarrow\bR^{d_u}$ denotes the corresponding output
field. 
Our primary focus is the boundary setting $\calD=\partial\Omega$. The underlying PDE may be posed in
$\Omega$ or in the exterior domain
$\mathbb{R}^d\setminus\overline{\Omega}$, but the learned operator acts only
on quantities defined on $\partial\Omega$.
This setting arises, for example, when predicting aerodynamic quantities on the
surface of a vehicle.

The goal of a neural operator is to approximate $\mathcal{G}^{\dagger}$ from data $\{f_i,\calD_i, u_i = \calG^{\dagger}(f_i,\calD_i)\}_{i=1}^{n}$.
Whereas much of the existing neural operator literature focuses on variations in the input field over a fixed domain
\cite{zhu2018bayesian,khoo2019switchnet,li2020fourier,lu2021learning},
this work focuses on variations in the geometry $\calD$. In
particular, $\calD$ need not admit a finite-dimensional parameterization, may vary substantially across samples, and may even undergo changes in topology.

To motivate our approach, consider potential flow around a three-dimensional object $\Omega \subset \bR^{3}$. For an inviscid,
incompressible, and irrotational flow, the velocity field can be written as
$v=\nabla\Phi$, where the velocity potential $\Phi$ satisfies the exterior
Laplace problem
\begin{equation}
\label{eq:laplace_exterior}
\begin{aligned}
    \Delta\Phi
    &=
    0
    &&\text{in }\bR^3\setminus\overline{\Omega},\\
    \frac{\partial\Phi}{\partial n_x}
    &=
    0
    &&\text{on }\partial\Omega,\\
    \nabla\Phi(x)
    &\rightarrow
    v_\infty
    &&\text{as }\lVert x\rVert_2\rightarrow\infty,
\end{aligned}
\end{equation}
where $n_x$ denotes the outward unit normal to $\partial\Omega$ and
$v_\infty$ is the prescribed far-field velocity. The quantity of interest is
the surface pressure coefficient, so the corresponding aerodynamic map is
\begin{equation}
    \label{eq:aerodynamic-map}
    \calG^{\dagger}: (v_{\infty}, \partial \Omega) \rightarrow \left.C_p\right|_{\partial\Omega}.
\end{equation}
In the notation of \cref{eq:G-continuous-intro}, the far-field velocity may
be represented as the constant boundary input $f(x)\equiv v_\infty$, the
geometry is $\calD=\partial\Omega$, and the output is
$u(x)=C_p(x)$ for $x\in\partial\Omega$.

For the nonlifting solution considered here, following the source-panel formulation \cite{hess1967calculation}, we represent the potential as
\begin{equation}
\Phi(x) = v_{\infty} \cdot x + \int_{\partial\Omega} \eta(y) \kappa(x - y) \dd S_y,\quad \kappa(x-y)=\frac{1}{4\pi\lVert x-y\rVert_2},
\label{eq:panel_phi_decomposition}
\end{equation}
where $\eta$ is an unknown density on $\partial\Omega$.  $\eta$ is determined by enforcing the no-penetration condition in \eqref{eq:laplace_exterior}:
\begin{align}
\label{eq:potential-flow-bie}
v_\infty\cdot n_{x} - \frac{1}{2} \eta(x) + \int_{\partial\Omega} \eta(y) \nabla_x \kappa(x-y) \dd S_y\cdot n_{x}
= 0.
\end{align}
Equation \eqref{eq:potential-flow-bie} is a Fredholm integral equation of the second kind for  $\eta$. Once $\eta$ is obtained, the surface velocity is computed as 
\begin{align}
\label{eq:potential-flow-velocity}
v(x) = \nabla \Phi(x) 
=  v_\infty + \int_{\partial\Omega} \eta(y) \nabla_x \kappa(x-y) \dd S_y. 
\end{align}
Bernoulli's relation then gives
\begin{equation}
\label{eq:pressure-coefficient}
C_p(x) = \frac{p(x) - p_\infty}{\frac{1}{2}\rho\lVert v_{\infty} \rVert_2^2} = 1 - \frac{\lVert v \rVert_2^2 }{\lVert v_\infty \rVert_2^2}
 \qquad
    x\in\partial\Omega.
\end{equation} 
Consequently, solving \cref{eq:potential-flow-bie}, evaluating the layer potential
in \cref{eq:potential-flow-velocity}, and applying
\cref{eq:pressure-coefficient} define the aerodynamic operator in
\cref{eq:aerodynamic-map}.

From this perspective, learning the solution operator \eqref{eq:G-continuous-intro} on variable geometries can be viewed as learning a geometry-dependent and potentially singular kernel $\kappa$, together with its associated operations: kernel integration, the solution of Fredholm integral equations, and their compositions. Since a Fredholm integral equation of the second kind admits a Neumann series expansion~\cite[Section 2.4]{kress1989linear} in terms of repeated kernel integration, accurately learning the kernel integral operator is the central task.
Motivated by this view, we represent geometries $\calD$ as point clouds~\cite{qi2017pointnet,li2018pointcnn,zeng2025point} and design kernel-based neural operators that act directly on functions defined over $\calD$. The proposed neural operator is built by composing parameterized kernel integral operators on $\calD$ (i.e., \cref{eq:potential-flow-bie,eq:potential-flow-velocity}), implemented with efficient evaluation, with pointwise nonlinearities, thereby approximating the solution operator without requiring an explicit geometric parameterization.
This formulation naturally promotes generalization across diverse geometries, including settings with significant geometric and topological variation.

\subsection{Contributions}
We adopt a kernel integral perspective on neural operators to better understand and improve their ability to generalize across variable geometries. Specifically, our contributions include:

\begin{enumerate}
\item We establish approximation results for singular kernel integral operators via a multiscale decomposition inspired by Ewald summation, and we derive corresponding error bounds.

\item We introduce the multiscale point cloud neural operator (M-PCNO), a neural network-based surrogate for PDE-induced operators on variable geometries with linear inference cost in the number of points. We also release an educational software package at \url{https://github.com/PKU-CMEGroup/NeuralOperator}.

\item We demonstrate geometric generalization in representative numerical experiments across diverse geometries, including large-scale three-dimensional fluid dynamics examples.

\end{enumerate}

\subsection{Preliminaries and Literature Review}
This work aims to advance the understanding of how neural operators generalize when learning PDE solution operators on variable geometries. In this section, we review related neural operator approaches and classical fast kernel summation methods.
The latter provides guiding principles
for constructing linear operators that approximate singular kernel integrals efficiently.

\subsubsection{Neural Operator Approaches}
Neural operators \cite{zhu2018bayesian,khoo2019switchnet, lu2021learning,li2020fourier} are neural network-based surrogate models that approximate PDE solution operators such as \eqref{eq:G-continuous-intro}.
Their key feature is that they approximate mappings between function spaces at the continuous level, rather than solutions at a fixed discretization.
By separating the operator representation from the discretization used in computation, neural operators can generalize across mesh resolutions.
Existing architectures are typically built from integral operators \cite{li2020fourier,kovachki2023neural,gin2021deepgreen,boulle2022learning,cao2024laplace,hao2026multiscale}, often motivated by Green's functions, or from differential operators \cite{liu2024neural,zeng2025point}; in either case, a standard numerical discretization is applied afterward to obtain concrete algorithms.

Extending neural operators to variable computational domains remains challenging.
Common strategies either map each geometry to a fixed reference domain via a parametric deformation~\cite{li2023fourier,yin2024dimon,xiao2024learning}, or embed the domain into a fixed bounding box, for example, using zero padding or signed distance functions~\cite{he2024geom,ye2024pdeformer,duvall2025discretization,liu2024domain,li2024geometry}. Both approaches reduce the problem to a fixed representation and enable the use of standard neural operator architectures. However, deformation maps may not be well-defined for complex geometries~\cite{xiao2024learning}, and embedding-based approaches often rely on mesh-resolution–dependent interpolation or extrapolation.

An alternative strategy represents variable geometries $\calD$ as point clouds, which naturally encode detailed geometric information. Point cloud networks, such as PointNet \cite{qi2017pointnet} and its extensions \cite{li2018pointcnn}, enable the extraction of geometric features for tasks such as classification and segmentation. 
Operator learning frameworks, including DeepONet \cite{lu2021learning,hu2025manifold} and transformer-based models \cite{cao2021choose,junfengpositional,wu2024transolver,wen2025geometry,wang2025mno}, have also been extended to operate on point cloud representations, often using encoder-based architectures to reduce memory and computational costs.
When connectivity information among points is available, graph neural networks provide a natural framework for PDE surrogate modeling~\cite{pfaff2020learning,liu2024laflownet,gao2025generative,shen2025vortexnet}. Although standard graph-based methods can be sensitive to mesh resolution, mesh-independent aggregation can mitigate this dependence, leading to graph neural operators~\cite{li2020neural,li2020multipole,mousavi2025rigno}.
Moreover, point cloud representations can be augmented with additional geometric information to further improve accuracy and robustness~\cite{goswami2022deep,he2024geom, ye2024pdeformer,duvall2025discretization,serrano2023operator,quackenbush2025transferable}.
In this work, we adopt the point cloud strategy and design neural operators that act directly on geometries $\calD$ represented as point clouds. Guided by boundary element formulations such as  \cref{eq:potential-flow-bie,eq:potential-flow-velocity}, we study how these operators generalize under variations in domain geometry.

\subsubsection{Classical Fast Kernel Summation Methods}

Neural operators are often built from kernel integral operators, motivated by Green's functions.
In many PDE settings, the associated kernels are singular and depend explicitly on geometric quantities such as boundary normals. Moreover, efficient evaluation of these kernel integrals is crucial for deployment at scale. These issues have received comparatively less attention in the neural operator literature, but they are central to the present work. We therefore review classical fast kernel summation methods for efficiently evaluating singular kernel integrals, which motivate our neural operator design.

Methods such as the fast multipole method~\cite{barnes1986hierarchical,greengard1987fast,anderson1992implementation,cheng1999fast,ying2004kernel,fong2009black} and hierarchical matrix techniques \cite{hackbusch2000sparse,bebendorf2000approximation,borm2003introduction} accelerate dense kernel matrix–vector multiplications by exploiting scale separation: far-field interactions are approximated using low-rank representations, while near-field interactions involving potentially singular kernels are evaluated exactly. As a result, these methods can achieve linear or quasi-linear computational complexity. 
Several works have explored neural operators inspired by such multiscale ideas~\cite{fan2019multiscale2,fan2019multiscale,boulle2023elliptic,sun2025learning}. 
However, incorporating these fast summation methods into learning-based neural operators for general kernels, especially in a manner optimized for modern GPU architectures, remains challenging.
In this work, we revisit an earlier strategy originating with Ewald summation \cite{ewald1921berechnung,darden1993particle,hockney2021computer}, in which far-field interactions are approximated in the Fourier domain \cite{bleszynski1996aim,phillips2002precorrected,strain1992fast}, while local interactions are computed exactly or approximated via local Taylor expansions \cite{greengard1990fast}. We adopt this strategy to the design of neural operators and demonstrate its effectiveness across a broad class of kernels and related problems, achieving moderate accuracy while naturally enabling efficient GPU implementations.

\subsection{Organization}
\label{ssec:organization}
In \cref{sec:u-approx-kernels}, we study Ewald–type decompositions for approximating singular kernel integral operators and provide a theoretical analysis of the resulting approximation errors. Building on these results, \cref{neural_layer} develops neural layers for efficient kernel integration across a broad class of commonly encountered kernels.
\Cref{sec:mpcno} introduces the M-PCNO. Numerical experiments in \cref{sec:numerics} validate the theory and demonstrate geometric generalization. Finally, \cref{sec:conclusion} provides concluding remarks.

\section{Approximation of Singular Kernel Integral Operators}
\label{sec:u-approx-kernels}
In this section, we consider a family of variable computational domains $\Omega$. We assume that every domain is contained in the box 
$$B = [0,l_1]\times[0,l_2]\times\cdots\times[0,l_d],$$
where $d$ is the spatial dimension and $\{l_i\}$ is the side lengths in the $i$th coordinate direction. We further assume a uniform separation from the boundary: there exists $d_B > 0$ such that $\mathrm{dist}(\Omega, \partial B) \geq d_B$ for all domains under consideration. 

Many physical problems of practical interest involve layer potential operators acting on fields defined on $\partial \Omega$:
\begin{subequations}
\label{eq:layer-potential}
    \begin{align}
         &\textrm{Single layer potential}: (\calK f)(x) = \int_{\partial \Omega} \kappa(x-y) f(y) \dd S_y, \label{eq:single-layer-potential}\\
    &\textrm{Double layer potential}: (\calK f)(x) = \int_{\partial \Omega} \frac{\partial \kappa(x-y)}{\partial n_y} f(y) \dd S_y,\label{eq:double-layer-potential}\\
    &\textrm{Adjoint double layer potential}: (\calK f)(x) = \int_{\partial \Omega} \frac{\partial \kappa(x-y)}{\partial n_x} f(y) \dd S_y\label{eq:adjoint-double-layer-potential},
    \end{align}
\end{subequations}
where $x\in\partial\Omega$, $\dd S_y$ denotes the boundary measure, and $\kappa$ is a translation-invariant kernel.
A canonical example is the fundamental solution of the Laplace operator:
\begin{equation}
    \kappa(x-y) = \begin{cases}
        -\frac{1}{2\pi}\ln \lVert x - y \rVert_2  \qquad  & d = 2\\
        \frac{1}{(d-2) \alpha_d \lVert x - y\rVert_2^{d-2}}  \qquad & d > 2,
    \end{cases}
\end{equation}
where $\alpha_d$ denotes the surface area of the unit sphere in $\bR^d$.

A central objective of this work is to approximate such layer potential operators in \cref{eq:layer-potential} by learning a
representation of the kernel $\kappa$ that is shared across geometries and enables efficient
evaluation of the associated integrals.
Since $\partial\Omega \subset B$, all differences $x-y$ lie in 
\begin{align}
\label{eq:B_2}
  B_2 = [-l_1,l_1]\times[-l_2,l_2]\times\cdots\times[-l_d,l_d].  
\end{align}
Accordingly, we focus on learning $\kappa: B_2 \rightarrow \bR$.
We further assume that $\kappa$ is periodic on $B_2$.
When the original kernel is not periodic, Fourier continuation~\cite{bruno2001fast} can be used to construct a periodic extension on $B_2$ and smooth near $\partial B_2$. This is justified because the geometries of interest remain at least a distance $d_B$ away from the boundary, so the periodic extension does not affect kernel evaluations on the geometries considered.

Because the kernel $\kappa$ is generally singular at the origin, efficient evaluation of the integrals in \cref{eq:layer-potential}  often benefits from a multiscale decomposition that separates long- and short-range interactions. As a canonical example, consider the three-dimensional Coulomb kernel $\kappa(x-y) = \frac{1}{r}$ with $r = \lVert x - y\rVert_2$, Ewald summation~\cite{ewald1921berechnung} employs the decomposition 
\begin{align}
\label{eq:ewald}
\frac{1}{r} = \frac{{\rm erf}(r/\delta)}{r} + \frac{{\rm erfc}(r/\delta)}{r}, \quad \textrm{where} \quad
{\rm erf}(r) = \frac{2}{\sqrt{\pi}}\int_0^{r} e^{-t^2}\dd t,  \quad  {\rm erfc}(r) = 1 - {\rm erf}(r).
\end{align}
Here $\delta > 0$ is the Ewald splitting parameter that separates the long- and short-range contributions.
The first term is a smooth long-range component~(including at $r=0$) and can be handled efficiently in Fourier space. The second term is a localized short-range component that decays rapidly for $r \gtrsim \delta$; indeed, $\frac{{\rm erfc}(r/\delta)}{r} \leq \frac{\delta}{\sqrt{\pi}r^2} e^{-r^2/\delta^2}$.
Consequently, the short-range contribution can be evaluated directly, or approximated locally, within a neighborhood of radius $\mathcal{O}(\delta)$.

Motivated by the Ewald splitting \eqref{eq:ewald}, we introduce an Ewald-type decomposition for approximating general singular kernel integral operators. This decomposition supports efficient evaluation and yields polynomially decaying approximation error bounds, summarized in the following theorem. The proof is deferred to \cref{sec:proof-theorem:approximation-error}.

\begin{theorem}
\label{theorem:approximation-error}
Let $d\geq2$ and define $B:=\left[0,\frac12\right]^d$, $B_2:=\left[-\frac12,\frac12\right]^d$.
Let $\partial\Omega\subset B$ be the boundary geometry. Since
$x-y\in B_2$ for every $x,y\in\partial\Omega$, define the
linear integral operator
\begin{equation}
\label{eq:kernel-operator-theorem}
    (\calK f)(x)=\int_{\partial\Omega}\kappa(x-y)f(y)\,dS_y\qquad x\in\partial\Omega.
\end{equation}

For $\delta\in(0,\tfrac12)$, let $\rho_\delta(y) = \frac{1}{(2\pi \delta^2)^ {d/2}} e^{-\frac{ \lVert y \rVert_2^2 }{2 \delta^2}}$ denote the Gaussian mollifier.
Periodically extend $\kappa$ from $B_2$ to
$\mathbb{R}^d$ and define
\[
(\kappa * \rho_\delta)(x)
:=
\int_{\mathbb{R}^d}
\kappa(x-y)\rho_\delta(y)\,\mathrm{d}y.
\]
We introduce the decomposition
\begin{equation}
\kappa
=
\kappa_{\rm long}+\kappa_{\rm short},
\qquad
\kappa_{\rm long}:=\kappa*\rho_\delta,
\qquad
\kappa_{\rm short}:=\kappa-\kappa*\rho_\delta.
\end{equation}
The corresponding
operators are
\begin{align*}
    (\calK_{\rm long}f)(x)=\int_{\partial\Omega}\kappa_{\rm long}(x-y)f(y)\,\dd S_y  \quad 
    (\calK_{\rm short}f)(x)=\int_{\partial\Omega}\kappa_{\rm short}(x-y)f(y)\,\dd S_y.
\end{align*}
For $p\in\mathbb N$, let
$\widehat{\kappa_{\rm long}}_k$ denote the Fourier coefficients of
$\kappa_{\rm long}$ and define the truncated Fourier representation and the corresponding truncated long-range operator:
\begin{align*}
    \hat{\kappa}_{\rm long} =\sum_{ k:\lVert k\rVert_\infty \leq  p }\widehat{\kappa_{\rm long}}_k e^{2\pi i k \cdot x}\qquad
    (\widehat{\calK}_{\rm long}f)(x)=\int_{\partial\Omega}\hat{\kappa}_{\rm long}(x-y)f(y)\,\dd S_y.
\end{align*}
For $\epsilon>\delta$, let $B_\epsilon(x):=\left\{y\in\mathbb{R}^d:\lVert y-x\rVert_2\leq\epsilon\right\}$ and define the localized short-range operator 
\begin{equation}
\label{eq:localized-short-range-operator}
    (\calK_{\rm short}^{\epsilon}f)(x)
    :=
    \int_{\partial\Omega\cap B_\epsilon(x)}
    \kappa_{\rm short}(x-y)f(y)\,\dd S_y,
    \qquad x\in\partial\Omega.
\end{equation}

Assume the following.
\begin{enumerate}[
    label=\textnormal{(A\arabic*)},
    ref=\textnormal{(A\arabic*)},
    leftmargin=*
]
    \item\label{assumption:kappa_assumption}
    \textbf{Kernel regularity.} 
    The kernel $\kappa$ is periodic and translation-invariant, with $\kappa \in L^1(B_2) \cap C^2(B_2\setminus\{0\})$. Moreover, there exists a constant $C_\kappa>0$ such that $
     \lVert \nabla^j \kappa(x) \rVert_2 \leq\frac{C_\kappa}{\lVert x \rVert_2^{j+d-1}},\quad j=0,1,2$,
 for all $x\in B_2\setminus\{0\}$.
    \item\label{assumption:geometry_assumption}
    \textbf{Geometric regularity.}
    The boundary geometry  $\partial \Omega \subset B$, is a compact embedded
    $(d-1)$-dimensional Lipschitz hypersurface with Lipschitz character bounded by $C_L$ in the sense of \cite[Definition 1.11]{rataj2019curvature}.
    \item\label{assumption:input_assumption}
    \textbf{Input regularity.}
    The input satisfies $f\in W^{2,\infty}(\partial \Omega)$. 

    \item\label{assumption:short-range_assumption}
    \textbf{Local short-range approximability.}
    There exist $\epsilon_0>0$, a local approximation order $q\geq0$, and a
constant $C_{\rm short}(C_\kappa, C_L,d)>0$, such that, for every $0<\epsilon\leq\epsilon_0$, there exists a linear local operator
$\widehat{\calK}_{\rm short}^{\epsilon}$,
\begin{equation}
\label{eq:local-short-range-error}
    \left\|
        \calK_{\rm short}^{\epsilon}f
        -
        \widehat{\calK}^{\epsilon}_{\rm short}f
    \right\|_{L^\infty(\partial\Omega)}
    \leq
    C_{\rm short} 
    \lVert f\rVert_{W^{2,\infty}(\partial\Omega)} \epsilon^{d+q}.
\end{equation}
\end{enumerate}
 
Then the following estimates hold:
    \begin{enumerate}
    [
    label=\textnormal{(P\arabic*)},
    ref=\textnormal{(P\arabic*)},
    leftmargin=*
]
    \item
    \textbf{Long-range Fourier approximation.}
    Define $C_{\rm long}=\frac{d\lVert\kappa\rVert_{L^1(B_2)}}{2\pi^2}(1+\frac{1}{\sqrt{2\pi}})^{d-1}>0$, then
        \begin{equation}
        \label{eq:kappa_long_conclusion}
            \Bigl\lVert \widehat{\calK}_{\rm long} f  - \calK_{\rm long} f \Bigr\rVert_{L^\infty(\partial\Omega)} \leq C_{\rm long}\frac{e^{-2\pi^2\delta^2p^2}}{\delta^{d+1} p}\lVert f\rVert_{L^1(\partial\Omega)}.
        \end{equation}

\item
    \textbf{Short-range localization.}
    For any $x\in B_2$ satisfying $\lVert x\rVert_2\geq\delta$, 
    \begin{equation}
    \label{eq:kappa_short_conclusion}
        \Bigl|\kappa_{\rm short}(x)\Bigl|\leq\frac{dC_{\kappa}}{2(1-\frac{1}{\sqrt d})^{d+1}}\frac{\delta^2}{\lVert x\rVert_2^{d+1}}+c_{\rm short}\frac{e^{-\frac{1}{2d}\frac{\lVert x\rVert_2^2}{\delta^2}}}{\delta^{d}},
    \end{equation}
   where  $c_{\rm short} = 
\frac{\lVert\kappa\rVert_{L^1(B_2)}}{(2\pi)^{d/2}}(1+\frac{2^dd}{(1-e^{-1/2})^d})+\frac{C_{\kappa}C_{d/2}}{\Gamma(\frac{d}{2})}
    \bigl(
    \frac{1}{2^{d/2}}+d
    \bigr)$.
    Here $\Gamma$ denotes the Gamma function, and $C_{d/2}$ is the Gamma-function–related constant defined in \cref{lemma:Gamma_function}.

    \item
    \textbf{Combined operator approximation.}
     Define 
\begin{equation}
\label{eq:kappa_tilde}
    \widehat{\calK} = \widehat{\calK}_{\rm long}+\widehat{\calK}^{\epsilon}_{\rm short}.
\end{equation}
Fix $\gamma \in (0,1)$
and set $\delta = p^{-\gamma}$, $\epsilon = \delta^t = p^{-\gamma t}$, and $t = \frac{2}{q+d+2}$.
For every integer $p$ satisfying $p > \max\{2^{\frac{1}{\gamma}}, \epsilon_0^{-\frac{1}{\gamma t}}\}$, we obtain the $L^\infty$ error estimate
\begin{align}
        \lVert \widehat{\calK} f - \calK f\rVert_{L^\infty(\partial\Omega)} 
&\leq C\Bigl( p^{-1+\gamma(d+1)}e^{-2\pi^2p^{2-2\gamma}} + p^{d\gamma}e^{-\frac{1}{2d}p^{\gamma(2-2t)}} +2p^{-\gamma t(q+d)}\Bigl) \nonumber\\
&=\calO\Bigl(p^{-\gamma \bigl(1 + \frac{q + d - 2}{q+d+2}\bigr)}\Bigr), \label{eq:calK_error}  
\end{align}
where $C > 0$ depends on  $C_\kappa$, $\lVert \kappa \rVert_{L^1(B_2)}$, $C_L$,  $d$, and $\lVert f\rVert_{W^{2,\infty}(\partial\Omega)}$.
\end{enumerate}
\end{theorem}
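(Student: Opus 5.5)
We prove the three parts in order. (P3) then follows from (P1), (P2) and assumption \ref{assumption:short-range_assumption} by the triangle inequality.

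\textbf{Step 1: (P1).} Since $\kappa$ is $1$-periodic on $B_2$, the Fourier coefficients of $\kappa_{\rm long}$ factor as
\[
\widehat{\kappa_{\rm long}}_k=\hat\kappa_k\,\hat\rho_\delta(k), \qquad \hat\rho_\delta(k)=e^{-2\pi^2\delta^2\lVert k\rVert_2^2},
\]
with $|\hat\kappa_k|\le\lVert\kappa\rVert_{L^1(B_2)}$. For $x,y\in\partial\Omega$ the truncation error of the kernel is then bounded uniformly by
\[
\lVert\kappa\rVert_{L^1}\sum_{\lVert k\rVert_\infty>p}e^{-2\pi^2\delta^2\lVert k\rVert_2^2}.
\]
We split this tail according to which coordinate exceeds $p$, which gives a factor $d$. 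The factor $e^{-2\pi^2\delta^2 k^2}$ is separable, so the tail is at most $d$ times a one-dimensional tail times $(d-1)$ full sums.
\begin{itemize}
\item One-dimensional tail: bound it by $\int_p^\infty e^{-2\pi^2\delta^2 s^2}\,ds\le e^{-2\pi^2\delta^2p^2}/(4\pi^2\delta^2p)$. The prefactors are absorbed into the constant.
\item Full sums: each is $\le 1+\int_{\bR}e^{-2\pi^2\delta^2s^2}\,ds = 1+1/(\sqrt{2\pi}\,\delta)$.
\end{itemize}
The resulting powers of $\delta$ combine to $\delta^{-(d+1)}$. This uses $\delta<1/2$, so that $1+1/(\sqrt{2\pi}\delta)\le\delta^{-1}(1+1/\sqrt{2\pi})$, and it produces $C_{\rm long}$. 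Integrating the kernel bound against $|f|$ gives the $\lVert f\rVert_{L^1}$ factor.

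\textbf{Step 2: (P2).} Write
\[
\kappa_{\rm short}(x)=\int\bigl(\kappa(x)-\kappa(x-y)\bigr)\rho_\delta(y)\,dy
\]
and split the integral into a near region $\lVert y\rVert_2\le\lVert x\rVert_2/\sqrt d$ and a far region.

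In the near region we Taylor expand to second order. The first-order term vanishes because $\rho_\delta$ is symmetric. The remainder is bounded via (A1) with $j=2$ evaluated at $\lVert x-\theta y\rVert\ge(1-1/\sqrt d)\lVert x\rVert$, times the second moment $\int\lVert y\rVert^2\rho_\delta=d\delta^2$. This yields the first term of \eqref{eq:kappa_short_conclusion}.

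The far region has Gaussian weight $\lesssim\delta^{-d}e^{-\lVert x\rVert^2/(2d\delta^2)}$. We bound two contributions there:
\begin{itemize}
\item The term $|\kappa(x)|$ times the Gaussian tail mass. This is an incomplete Gamma integral, controlled by \cref{lemma:Gamma_function}, which gives the $C_{d/2}/\Gamma(d/2)$ terms.
\item The term $\int|\kappa(x-y)|\rho_\delta(y)\,dy$ over the far region. We handle it by summing over periodic copies of $B_2$: each copy contributes at most $\lVert\kappa\rVert_{L^1}$ times the maximal Gaussian value on that cell. The geometric series in each coordinate gives $(1-e^{-1/2})^{-d}$, and the factor $2^d d$ comes from counting cells.
\end{itemize}
This bookkeeping, especially the periodic-image sum and matching the exact constants, is the main obstacle. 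I would first obtain the bounds up to constants and only then track them precisely.

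\textbf{Step 3: (P3).} Decompose the error as
\[
\widehat\calK f-\calK f=(\widehat\calK_{\rm long}-\calK_{\rm long})f+(\widehat\calK^\epsilon_{\rm short}-\calK^\epsilon_{\rm short})f-(\calK_{\rm short}-\calK^\epsilon_{\rm short})f .
\]
\begin{itemize}
\item The first term is bounded by (P1) with $\delta=p^{-\gamma}$, and $\lVert f\rVert_{L^1}\le|\partial\Omega|\lVert f\rVert_\infty$. This gives $p^{-1+\gamma(d+1)}e^{-2\pi^2p^{2-2\gamma}}$.
\item The second term is bounded by (A4), which applies since $\epsilon\le\epsilon_0$ for $p$ large. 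This gives $p^{-\gamma t(q+d)}$.
\item The third term integrates $|\kappa_{\rm short}|$ over $\partial\Omega\setminus B_\epsilon(x)$, where $\lVert x-y\rVert\ge\epsilon>\delta$ (this needs $t<1$ and $p>2^{1/\gamma}$), so (P2) applies. The Gaussian part gives $\delta^{-d}e^{-\epsilon^2/(2d\delta^2)}=p^{d\gamma}e^{-p^{\gamma(2-2t)}/(2d)}$. For the algebraic part we need a surface measure estimate from (A2): $|\partial\Omega\cap B_r(x)|\le C(C_L)r^{d-1}$. A dyadic shell sum then gives $\int_{\lVert x-y\rVert\ge\epsilon}\lVert x-y\rVert^{-(d+1)}\,dS_y\lesssim\epsilon^{-2}$, so this part is $\lesssim\delta^2\epsilon^{-2}=p^{-\gamma(2-2t)}$.
\end{itemize}
The choice $t=2/(q+d+2)$ is exactly the one that balances $2-2t=t(q+d)$. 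This produces the doubled algebraic term in \eqref{eq:calK_error}; both exponential terms are negligible, and the final rate follows.
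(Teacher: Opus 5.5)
Your proposal is correct and follows essentially the same route as the paper. That means Gaussian damping of the Fourier coefficients with a separable tail bound for (P1); the split at $\lVert y\rVert_2=\lVert x\rVert_2/\sqrt{d}$ with symmetric Taylor cancellation, the incomplete-Gamma lemma, and the periodic-cell sum for (P2); and, for (P3), the three-term triangle inequality with the dyadic-shell surface estimate and the balancing choice $t=2/(q+d+2)$. The only cosmetic difference is that you bound the full lattice sums by $1+1/(\sqrt{2\pi}\,\delta)$ rather than restricting to the first orthant with a $2^d$ factor, and this yields the same constant $C_{\rm long}$.
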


\begin{remark}[Convergence rate]
\Cref{eq:kappa_tilde} approximates the linear integral operator $\calK$ by combining a truncated Fourier representation of the smooth long-range operator with a localized approximation of the short-range operator. 
The error bound \cref{eq:calK_error}
consists of two exponentially decaying terms and one polynomially decaying term. For sufficiently large $p$, the polynomial term $p^{-\gamma t(q+d)} = p^{-\gamma \bigl(1 + \frac{q + d - 2}{q+d+2}\bigr)}$ dominates. 
 Since $\gamma\in(0,1)$ is a free parameter, we may take $\gamma\to 1$, in which case the polynomial decay rate approaches $p^{-\bigl(1 + \frac{q +d- 2}{q+d+2}\bigr)}.$
This rate is strictly faster than $\calO(p^{-1})$ when the local approximation order satisfies $q > 0$, and it can be made arbitrarily close to $\calO(p^{-1})$ when $q=0$. The theorem thus clarifies how the truncated mode number $p$ (i.e., retaining modes with $\lVert k \rVert_{\infty}\leq p$) controls the approximation error, a relationship that is also observed numerically in \cref{sec:numerics}.
\end{remark}

\begin{remark}[Applicability of the assumptions]
    The kernel regularity assumptions in \ref{assumption:kappa_assumption} are satisfied by many commonly used kernels, including those listed in \cref{tab:kernel-short-range-approximation}. 
    The regularity assumption on $f$ is used to control the Taylor remainder
in the local approximation
\cref{eq:local-short-range-error}.
The curvature-dependent expansions   in
\cref{tab:kernel-short-range-approximation} additionally require sufficient
smoothness of the boundary geometry.
Under these additional geometric
assumptions, the displayed expansions support the conservative choice
$q=0$ for the local short-range approximation in \ref{assumption:short-range_assumption}. Higher local approximation
orders are available for some two-dimensional double-layer-type operators.
\end{remark}

\begin{remark}[Implications for neural layer design]
    This decomposition, combining a truncated Fourier representation with a localized approximation, serves as a useful conceptual template for our neural layer design. 
    In classical Ewald-type decompositions, selecting the splitting parameter $\delta$ is not straightforward. In contrast, our neural layers explicitly include Fourier and local components and learn how to weight and combine them from data, without committing to the specific construction \cref{eq:kappa_tilde}.
    \Cref{theorem:approximation-error} can therefore be viewed as an achievability benchmark: it shows that this multiscale ansatz can attain an error no larger than \eqref{eq:calK_error} when training is effective.
    For simplicity, the theorem is stated on $B= [0,\frac{1}{2}]^d$; the corresponding estimates for a general box 
$B=\prod_{i=1}^{d}[0,l_i]$ follow by rescaling. In the neural operator design below, we allow arbitrary bounding boxes.
\end{remark}

\section{Neural Layers for Kernel Integrals}
\label{neural_layer}
In this section, we design neural layers that approximate singular kernel integral operators guided by the Ewald-type decomposition in \cref{theorem:approximation-error}. Specifically, we approximate the smooth long-range component using a truncated Fourier representation and treat the short-range component via localized Taylor expansions. 

We consider integral operators defined on a sufficiently smooth boundary geometry $\calD=\partial\Omega\subset\mathbb{R}^d$.
Let $\mathcal{K}$ be an integral operator acting on sufficiently smooth $\bR^{d_g}$-valued functions on $\calD$, $\mathcal{K}:  \{g: \calD \rightarrow \bR^{d_{g}}\} \longrightarrow \{g: \calD \rightarrow \bR^{d_{g}}\}$.
The operator is defined by
\begin{equation}
\label{eq:kernel-map}
 (\mathcal{K}g)(x) = \int_{\calD} \kappa(x-y; n_x, n_y) g(y) \dd S_y, \quad x \in \calD.
\end{equation}
where $\kappa$ is translation-invariant in the displacement $x-y$ and may have a singularity at the origin. It may also depend on the outward unit normals $n_x$ and $n_y$.
For the kernels considered in this work, the normal dependence enters multiplicatively, encompassing the examples listed in
\cref{tab:kernel-short-range-approximation}.

We first describe the approximation of the smooth long-range component and then develop a local approximation of the short-range component, and finally combine these ingredients to construct the proposed multiscale point cloud neural layer.

\subsection{Long-Range Approximation}
\label{ssec:long-range}
For the long-range component, we approximate the smooth part of the kernel-induced map in \eqref{eq:kernel-map} by a truncated Fourier representation:
\begin{equation}
\begin{split}
   \int_{\calD}\kappa_{\rm long}(x-y; n_x, n_y) g(y) \dd S_y  
   &\approx  (\bK_{\rm long} g)(x). 
\end{split}
\end{equation}
Following  Fourier neural layer parameterizations~\cite{nelsen2021random,li2020fourier,kovachki2023neural,huang2024operator,de2022cost}, we represent the translation-invariant component of the smooth long-range kernel by the truncated Fourier
expansion
$$\kappa_{\rm long}(x - y) \approx \sum_{k:\lVert k\rVert_\infty \leq  p } e^{2\pi i k \cdot\frac{x - y}{2l}} W_{v}^{k},$$
where the learnable matrices $W_{v}^{k}$ parameterize the Fourier coefficients of the matrix-valued kernel. The expansion is truncated to modes satisfying $\|k\|_\infty\le p$. The normalized displacement is defined componentwise by 
\begin{equation}
    \frac{x - y}{2l} := (\frac{x_1 - y_1}{2l_1},\cdots,\frac{x_d - y_d}{2l_d}),
\end{equation} 
where $\{l_i\}_{i=1}^d$ are determined by the bounding box $B_2$ defined in \cref{eq:B_2}. 
Since the kernel may depend on both the source normal $n_y$ and the target normal $n_x$, we incorporate this dependence through the following factorized representation:
\begin{equation}
\label{eq:K_long_surf}
\begin{split}
(\bK_{\rm long}^{(1)} g)(x)
&:=  \sum_{k:\lVert k\rVert_\infty \leq  p}  \int_{\partial \Omega} e^{2\pi i k \cdot\frac{x - y}{2l}} W_{v}^{k}  W_1 \begin{bmatrix}
    g(y)  \\ 
    \textrm{vec}\bigl(g(y) \otimes n_y \bigr)
\end{bmatrix} \dd S_y,  \\
(\bK_{\rm long} g)(x)
&:=  W_2 
\begin{bmatrix}
\bK_{\rm long}^{(1)} g(x)  \\
\textrm{vec}\bigl((\bK_{\rm long}^{(1)} g)(x) \otimes n_x \bigr)
\end{bmatrix}.
\end{split}
\end{equation}
Here
$W_1\in\bR^{d_g\times d_g(d+1)}$ and
$W_2\in\bR^{d_g\times d_g(d+1)}$ are learnable matrices.
The matrix $W_1$
encodes dependence on the source normal $n_y$ by combining $g(y)$ with
$\operatorname{vec}(g(y)\otimes n_y)$, as occurs in the double layer
potential in \eqref{eq:double-layer-potential}. Similarly,
$W_2$ combines the normal-independent features with features depending on
the target normal $n_x$, as in the adjoint double layer potential in
\eqref{eq:adjoint-double-layer-potential}.

\subsection{Short-Range Approximation}
\label{ssec:short-range}
The short-range component is designed to capture the near-field contribution
to the kernel integral that is not accurately resolved by the truncated
Fourier approximation in \cref{eq:K_long_surf}, particularly near the kernel singularity. 
For $x\in\calD$, let
\begin{equation}
\label{eq:B_epsilon}
    B_\epsilon(x)
    :=
    \left\{
        y\in\mathbb{R}^d:
        \lVert y-x\rVert_2\leq\epsilon
    \right\}
\end{equation}
denote the Euclidean ball of radius $\epsilon$ centered at $x$.
We approximate the corresponding local contribution by
\begin{equation}
\label{eq:local-integral}
\int_{\calD\cap B_{\epsilon}(x)} \kappa'(x-y, n_x, n _y) g(y) \dd S_y \approx  (\bK_{\rm short} g)(x),
\end{equation}
where $\kappa'$ denotes the residual short-range kernel after subtracting the long-range Fourier component from $\kappa$.

To approximate \cref{eq:local-integral}, we apply an intrinsic Taylor expansion of $g$ about $x$.
For a sufficiently smooth function $g$,
\begin{equation}
\label{eq:intrinsic-taylor-short}
g(y) = g(x) +  \nabla_{\calD} g(x)   v_x(y)   + \mathcal{O}(\lVert v_x(y) \rVert_2^2),
\end{equation}
where $v_x(y):= \log_x(y) \in T_x\calD$ is the tangent space coordinate of $y$ at $x$ induced by the logarithmic map.  
Because $y\in \calD\cap B_\epsilon(x)$, we have $\lVert v_x(y)\rVert_2 = \calO(\epsilon)$  provided that $\calD$ is regular and $\epsilon$ is sufficiently small.
Since $\calD =\partial \Omega$ is a curve or surface, $\nabla_{\calD}$ denotes the tangential gradient, obtained by projecting the ambient gradient onto the
tangent space $T_x\calD$.
Substituting \cref{eq:intrinsic-taylor-short}  into  \cref{eq:local-integral} yields
\begin{equation}
\label{eq:local-integral-expand}
\int_{\calD\cap B_\epsilon(x)} \kappa'(x-y; n_x, n_y) g(y) \dd S_y = M_0(x) g(x) +  M_1(x) : \nabla_{\calD} g(x)  + \mathcal{R}(x),
\end{equation}
where $:$ denotes the natural contraction between the first-moment tensor
$M_1(x)$ and the tangential Jacobian $\nabla_{\calD}g(x)$. The local kernel moments are defined by
\begin{align*}
M_0(x) = \int_{\calD\cap B_\epsilon(x)} \kappa'(x-y; n_x, n_y) \dd S_y, \,
M_1(x) = \int_{\calD\cap B_\epsilon(x)} \kappa'(x-y; n_x, n_y) v_x(y) \dd S_y. 
\end{align*}
The remainder $\mathcal{R}(x)$ arises from integrating the Taylor remainder $\mathcal{O}(\lVert v_x(y) \rVert_2^2)$ against the residual kernel. 

The construction \eqref{eq:local-integral-expand} does not approximate the singular
kernel pointwise. Instead, it approximates the local contribution of the kernel
after integration. For the weakly singular kernels considered here and listed in \cref{tab:kernel-short-range-approximation}, the resulting local moments are finite, thereby mitigating the effect of the kernel's pointwise singularity.

\begin{table}[htbp]
    \begin{center}
    \resizebox{\textwidth}{!}{%
        \begin{tabular}{ccc}
        \Xhline{1.2pt}
        Potential  & Kernel  & Short-range asymptotic approximation   \\ \Xhline{1.2pt}
            \makecell{2D Laplacian single \\layer potential}  & $\frac{-1}{2\pi}\log{\lVert x-y\rVert_2}$  & $-\frac{\epsilon\log\epsilon-\epsilon}{\pi}f(x) + o(\epsilon^2)$   \\ \hline
            \makecell{2D Laplacian double \\layer potential}  & $\frac{(x-y)\cdot n_y}{2\pi\lVert x - y \rVert^2_2}$  & $-\frac{\epsilon \mathrm{tr}[\nabla_{\calD} n_x]}{2\pi}f(x) + \calO(\epsilon^3)$   \\ \hline
            \makecell{2D Modified Laplacian \\double layer potential}  & $\frac{x - y}{2\pi \|x - y\|_2^2}$  & $ 
 -\frac{\epsilon \mathrm{tr}[\nabla_{\calD} n_x] n_x}{2\pi}f(x)  -\frac{\epsilon}{\pi}\nabla_{\cal D} f(x)
+ \mathcal{O}(\epsilon^3)$    \\ \hline
            \makecell{2D Adjoint Laplacian \\double layer potential} &  $\frac{(y-x)\cdot n_x}{2\pi\lVert x - y \rVert^2_2}$  & $-\frac{\epsilon \mathrm{tr}[\nabla_{\calD} n_x]}{2\pi}f(x) +  \calO(\epsilon^3)$    \\ \hline
            \makecell{2D Stokeslet} &$\frac{1}{4\pi}\bigl(-\log{\lVert x-y\rVert_2}I_2+\frac{(x-y)(x-y)^T}{\lVert x-y\rVert_2^2}\bigr)$ & $\frac{-\epsilon\log\epsilon+\epsilon}{2\pi}f(x) + \frac{\epsilon}{2\pi}\bigl(I_2 - n_x  n_x^T\bigr)f(x)+o(\epsilon^2)$   \\ 
            \Xhline{1.2pt}
            \makecell{3D Laplacian single \\layer potential}  & $\frac{1}{4\pi \lVert x-y\rVert_2} $  & $\frac{\epsilon}{2}f(x) + \mathcal{O}(\epsilon^3)$ \\ \hline
            \makecell{3D Laplacian double \\layer potential} & $\frac{(x-y)\cdot n_y}{4\pi\lVert x - y \rVert^3_2}$  & $-\frac{\epsilon \mathrm{tr}[\nabla_{\calD} n_x]}{8}f(x)+\mathcal{O}(\epsilon^3)$\\ \hline
            \makecell{3D Modified Laplacian \\double layer potential}   & $\frac{x - y}{4\pi \|x - y\|_2^3}$  & $ 
  -\frac{\epsilon \mathrm{tr}[\nabla_{\calD} n_x] n_x }{8} f(x)  - \frac{\epsilon}{4}\nabla_{\calD} f(x) 
+ \mathcal{O}(\epsilon^3)$ \\ \hline
            \makecell{3D Adjoint Laplacian \\double layer potential} & $\frac{(y-x)\cdot n_x}{4\pi\lVert x - y \rVert^3_2}$  & $-\frac{\epsilon \mathrm{tr}[\nabla_{\calD} n_x]}{8} f(x)+\mathcal{O}(\epsilon^3)$ \\ \hline
            \makecell{3D Stokeslet}   & $\frac{1}{8\pi}\bigl(\frac{1}{\lVert x-y\rVert_2}I_3 + \frac{(x-y)(x-y)^T}{\lVert x-y\rVert_2^3}\bigr)$  &  $\frac{\epsilon}{4}f(x) + \frac{\epsilon}{8} (I - n_x n_x^T) f(x) +\mathcal{O}(\epsilon^3)$ \\ 
            \Xhline{1.2pt}
        \end{tabular}
        }
    \end{center}
    \caption{Summary of common 2D and 3D layer potentials, their kernels, and short-range asymptotic approximations of $\displaystyle \int_{\partial\Omega\cap B_{\epsilon}(x)} \kappa(x-y; n_x, n _y) f(y) \dd S_y$. When the kernel $\kappa$ is singular, the integral is understood in the sense of the Cauchy principal value. The derivations are deferred to the supplementary material.}
    \label{tab:kernel-short-range-approximation}
\end{table}

These local kernel moments, $M_0$ and $M_1$, encode both the kernel profile and the local geometry. To illustrate their structure, we consider several commonly used 2D and 3D layer potentials. The corresponding short-range asymptotic expansions are summarized in \cref{tab:kernel-short-range-approximation}. In each case,  \cref{eq:local-integral-expand} captures the leading-order behavior.
The remainders are $o(\epsilon^2)$ for one-dimensional boundaries and
$\mathcal{O}(\epsilon^3)$ for two-dimensional boundaries.
For the kernels listed in \cref{tab:kernel-short-range-approximation}, the leading-order moments $M_0$ and $M_1$ depend
on the local geometry through low-order combinations of the outward normal
$n_x$ and the curvature information encoded by $\nabla_{\calD}n_x$.
Motivated by this structure, we define
\begin{equation}
\label{eq:K_short_surf}
\begin{split}
&(\bK_{\rm short}^{(1)} g)(x)  := Wg(x) + b +  W_{g^{'},1}\texttt{SoftSign}\bigl(W_{g,1}\textrm{vec}\bigl(\nabla_{\calD}g (x)\bigr)\bigr) \\
&(\bK_{\rm short} g)(x) := (\bK_{\rm short}^{(1)} g)(x) + W_{g,4}\Bigl(\texttt{SoftSign}\Bigl(W_{g,3}
    \begin{bmatrix}
     n_x \\  \textrm{vec}\bigl(\nabla_{\calD}  n_x\bigr) 
    \end{bmatrix} \Bigr) \odot W_{g,2} g (x)\Bigr), 
\end{split}
\end{equation}
where $\odot$ denotes the componentwise product. 
The auxiliary term $ \bK_{\rm short}^{(1)} g $ captures the
dependence on the local value and tangential gradient of $g$.
Here $W\in\bR^{d_g\times d_g}$ and $b\in\bR^{d_g}$ define a pointwise affine map analogous to that used in the Fourier neural operator architecture~\cite{li2020fourier}. The matrices $W_{g,1}\in\bR^{d_g\times(d_g d)}$ and $W_{g',1}\in\bR^{d_g\times d_g}$ parameterize the correction associated with the tangential gradient of $g$.
Because the gradient features may have large magnitudes in regions of
sharp spatial variation, we apply the \texttt{SoftSign} function componentwise to obtain a bounded transformation. For a vector $z$, we have $[\texttt{SoftSign}(z)]_j = \frac{z_j}{1 + |z_j|}$.
This transformation bounds the magnitude of each component while preserving
its sign.
The second
term in \cref{eq:K_short_surf} constructs a learned embedding of the local geometry from $n_x$ and
$\nabla_{\calD}n_x$ and uses this embedding
to modulate the transformed feature $W_{g,2}g(x)$. The corresponding
learnable matrices are
$W_{g,2}\in\bR^{d_g\times d_g}$, $W_{g,3}\in\bR^{d_g \times (d^2+d)}$, and $ W_{g,4}\in\bR^{d_g\times d_g}$.

\subsection{Multiscale Point Cloud Neural Layer}
We combine the long-range operator $\bK_{\rm long}$ from \cref{ssec:long-range} with the short-range operator $\bK_{\rm short}$ from \cref{ssec:short-range} to construct a multiscale point cloud neural layer, denoted by $\calL^\calD$. Following a residual architecture
\cite{he2016deep},  the layer maps an input function $g: \calD \rightarrow \bR^{d_g}$ to an output function $\calL^\calD g: \calD \rightarrow \bR^{d_g}$ via
\begin{equation}
\begin{split}
\label{eq:mpcno-layer}
    &(\calL^\calD g)(x) = 
    g(x) + \sigma \Bigl( (\bK_{\rm long} g)\,(x)  + (\bK_{\rm short} g)\,(x)\Bigr),
\end{split}
\end{equation}
where $\sigma$ is a pointwise activation function. Throughout this work, we
use the Gaussian error linear unit (GELU)~\cite{hendrycks2016gaussian}.

The relative contributions of the long- and short-range components are not
prescribed a priori; instead, they are determined during training through
the parameters of $\bK_{\rm long}$ and $\bK_{\rm short}$. The radius $\epsilon$ in \cref{eq:local-integral} is introduced to motivate
the near-field expansion and is not an explicit parameter of the neural
layer.

\section{Multiscale Point Cloud Neural Operator}
\label{sec:mpcno}
In this section, we assemble the neural layers introduced in
\cref{neural_layer} into the multiscale point cloud neural operator
(M-PCNO). We first present the network architecture, then describe its
point cloud implementation and computational cost, and finally specify the
supervised training objective.

\subsection{Network Architecture}
The M-PCNO, denoted by $\calG_\theta$, is designed to approximate the
PDE-induced operator
\begin{align}
    \calG^{\dagger} : (f, \calD) \mapsto u,
\end{align}
where $f:\calD\rightarrow\bR^{d_f}$ and
$u:\calD\rightarrow\bR^{d_u}$ denote the prescribed input field and the
corresponding output field, respectively. Both fields are defined on the
boundary geometry $\calD=\partial\Omega$.  To encode geometric information, we augment $f$ with the coordinate
field $x \mapsto x$ and the outward
unit normal field $n_{(\cdot)}:\partial\Omega\rightarrow\bR^d, \quad x\mapsto n_x$.
We denote the resulting augmented input field by $\tilde{f}$.

The M-PCNO first applies a lifting map $\calP$ to embed the augmented input \(\tilde{f}\) 
into a higher dimensional latent feature field \(g_0:\calD\to\mathbb R^{d_g}\). It then applies $L$ multiscale point cloud
neural layers $\{\calL_i^\calD\}_{i=1}^{L}$, defined in
\cref{eq:mpcno-layer}, and finally maps the resulting latent feature field
to the output space through a projection map $\calQ$.
The architecture is given by
\begin{equation}
\label{eq:NO_architecture}
\begin{split}
&g_0(x) = \calP\bigl(\tilde{f}(x)\bigr), \qquad
g_{i} = \calL_i^\calD(g_{i-1}), \quad i=1,\dots,L, \qquad
u(x)=\calQ\bigl(g_L(x)\bigr),\\
&\calG_\theta(f, \calD) = \calQ\circ \calL^\calD_L \circ \calL^\calD_{L-1} \circ \cdots \calL^\calD_1 \circ \calP (\tilde{f}),
\end{split}
\end{equation}
where $\theta$ denotes the collection of all trainable parameters.

\subsection{Point Cloud Implementation}
At the discrete level, the geometry $\calD$ is represented by
a point cloud 
$$X = \{x^{(i)}\}_{i=1}^{N} \subset \calD,$$ 
together with mesh connectivity information and quadrature weights.
The lifting map $\calP$ is implemented as a pointwise affine map. 
For the long-range operator $\bK_{\rm long}$ in
\cref{eq:K_long_surf}, the integrals are evaluated on the point cloud using numerical quadrature \cite{lingsch2023beyond,zeng2025point}. 
For example, a representative
Fourier component is approximated by
\begin{equation}
\begin{split}
    \sum_{k:\lVert k\rVert_\infty \leq  p }  \int_{\calD} e^{2\pi i k \cdot \frac{x - y}{2l}} W_{v}^{k}  g (y) \dd S_y 
    &\approx 
    \sum_{k:\lVert k\rVert_\infty \leq  p } e^{2\pi i k \cdot \frac{x}{2l}}   W_{v}^{k} \Bigl(\sum_{i=1}^{N}   e^{-2\pi i k \cdot \frac{y^{(i)}}{2l}}    g (y^{(i)}) \dd S^{(i)} \Bigr),
\end{split}
\end{equation}
where $\dd S^{(i)}$ denotes the quadrature weight associated with the point $y^{(i)}$. 
The weights approximate the local arc-length or surface measure and are precomputed from the point cloud or an associated mesh.
For the short-range operator $\bK_{\rm short}$ in \cref{eq:K_short_surf}, the tangential gradients are approximated using a local least-squares reconstruction~\cite[Section 3.2]{zeng2025point}.
Specifically, $\nabla_{\calD} g(x^{(i)})$ is estimated from the function values at $x^{(i)}$ and a local neighborhood of nearby points; a neighborhood size on the order of $d$ typically suffices to make the local least-squares system well conditioned. Neighbor sets are precomputed from the point cloud or an associated connectivity graph. The gradient estimation can be implemented as a single message-passing step.
The projection map $\calQ$ converts the final latent representation into $d_u$ output channels. In this work, it is implemented as a two-layer pointwise multilayer perceptron.

Overall, the multiscale point cloud neural operator is formulated at the operator level, as in \cref{eq:NO_architecture}, and is discretized using traditional numerical discretizations: numerical quadrature for integration and local least squares reconstruction for gradient estimation. This allows the method to handle varying spatial resolutions, with discretization error controlled by local mesh size rather than by the learning procedure.

\paragraph{Computational cost} Let
$K:=(2p+1)^d$
denote the number of retained Fourier modes. The dominant floating-point operation
count for one forward evaluation is
\begin{equation}
\begin{split}
    C_{\rm total}  = \calO\Bigl(K L d_g N + L K d_g^2 + d L d_g^2 N\Bigr).
\end{split}
\end{equation}
The first term accounts for the forward and inverse
Fourier sums over the $N$ points, the second accounts for the mode-wise
channel mixing by the Fourier weight matrices, and the third accounts for
the local short-range operations, including gradient reconstruction and
pointwise feature transformations.
A detailed derivation is provided in the supplementary material. For fixed spatial
dimension $d$, Fourier truncation parameter $p$, latent width $d_g$, and
network depth $L$, the inference cost is linear in the number of points
$N$. The prefactor may nevertheless be substantial because the number of
retained Fourier modes, $(2p+1)^d$, grows rapidly with both $p$ and $d$.

\subsection{Training Objective}
\label{ssec:mpcno-training}
We train the M-PCNO in a supervised setting using a dataset  
\begin{equation}
    \left\{
        (f_i,\calD_i,u_i)
    \right\}_{i=1}^{n},
    \qquad
    u_i=\calG^\dagger(f_i,\calD_i),
\end{equation}
where the input-geometry pairs $(f_i,\calD_i)$ are sampled from a
distribution $\mu$. The population risk is the expected relative
$L^2$ error
\begin{equation}
\label{eq:pop_risk}
    \calJ(\theta)
    :=
    \bE_{(f,\calD)\sim\mu}
    \left[
        \frac{
            \left\|
                \calG^\dagger(f,\calD)
                -
                \calG_\theta(f,\calD)
            \right\|_{L^2(\calD)}
        }{
            \left\|
                \calG^\dagger(f,\calD)
            \right\|_{L^2(\calD)}
        }
    \right].
\end{equation}
In practice,  we minimize the corresponding empirical risk
\begin{equation}
\label{eq:emp_risk}
    \calJ_n(\theta)
    :=
    \frac{1}{n}
    \sum_{i=1}^{n}
    \frac{
        \left\|
            u_i-\calG_\theta(f_i,\calD_i)
        \right\|_{L^2(\calD_i)}
    }{
        \left\|u_i\right\|_{L^2(\calD_i)}
    }.
\end{equation}
When the fields are represented on point clouds, we approximate the
$L^2$ norms using equal point weights. This approximation is consistent with the surface $L^2$ norm when the points are approximately uniformly distributed with respect to the relevant surface measure; otherwise, it defines an equal-weight empirical $L^2$ norm.

\section{Numerical Study}
\label{sec:numerics}
In this section, we present numerical studies of the M-PCNO, with a particular focus on geometric generalization. Specifically,
\begin{enumerate}
    \item We first investigate the learning of integral operators with translation-invariant kernels defined on variable 2D curves, focusing on the approximation error.
    \item We then consider two exterior Laplace problems: a 2D
    Neumann-to-Dirichlet map and 3D potential flow. These
    operators involve both layer potentials and the solution of
    geometry-dependent boundary integral equations.
    \item Finally, we examine turbulent flow over 3D vehicles  to
    assess the performance of M-PCNO for a nonlinear PDE-induced operator
    on complex geometries.
\end{enumerate}
Detailed experimental setups are provided in the supplementary material. 

\subsection{Kernel Integral Problem}
\label{ssec:kernel_integral}
We first study integral operators defined on variable 2D curves:
$$\calG^{\dagger}: (f, \partial \Omega) \rightarrow u, \qquad u(x) = \int_{\partial \Omega} \kappa(x - y; n_x, n_y) f(y) \dd S_y, $$
where $\partial \Omega \subset \bR^2$ is a variable boundary geometry.  
We consider five representative kernels: the 2D Laplace single layer, double layer, modified double layer, and adjoint double layer kernels, and the 2D Stokeslet. Their definitions are given
in \cref{tab:kernel-short-range-approximation}.

The input functions $f$ are sampled from a Gaussian random field. We generate two classes of boundary geometries. The \textit{single-curve
dataset} consists of geometries with one closed curve, whereas the \textit{two-curve dataset} consists of two disjoint closed curves placed side
by side; see the left and right pairs of columns, respectively, in \cref{fig:kernel_integral}.
The second dataset therefore introduces an out-of-distribution change in both geometry and topology, from one connected component to two. The models are trained on the \textit{single-curve dataset} and are evaluated on both the single-curve in-distribution test set and the two-curve out-of-distribution test set.

\begin{figure}
    \centering
\includegraphics[width=0.9\linewidth]{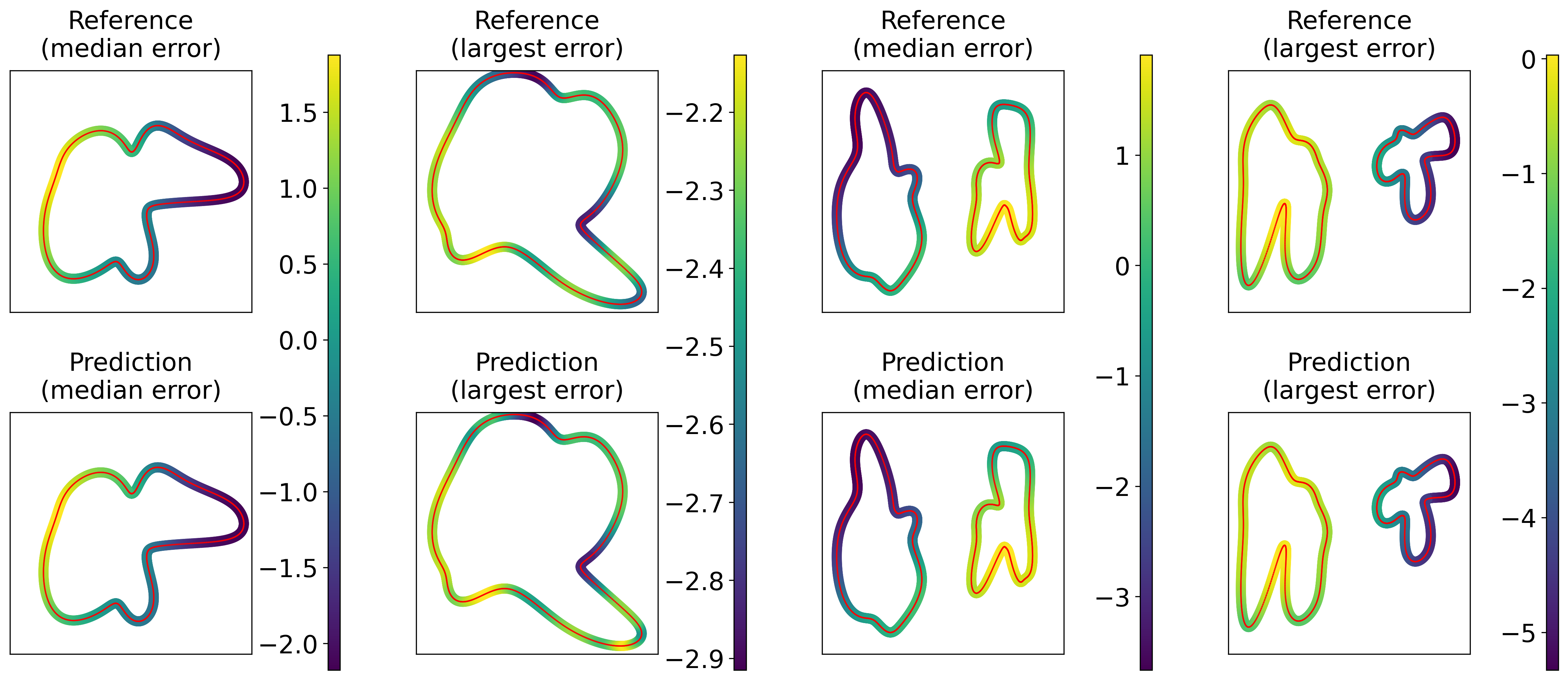}~~~~
    \caption{Representative results for learning the Laplacian single layer potential using a 5-layer M-PCNO with $p=32$ and $n=8000$. Each column shows the reference solution (top) and the prediction (bottom) for test cases with median and largest relative errors from the single-curve (left two) and two-curve (right two) test datasets.}
    \label{fig:kernel_integral}
\end{figure}

\begin{figure}
    \centering
\includegraphics[width=\linewidth]{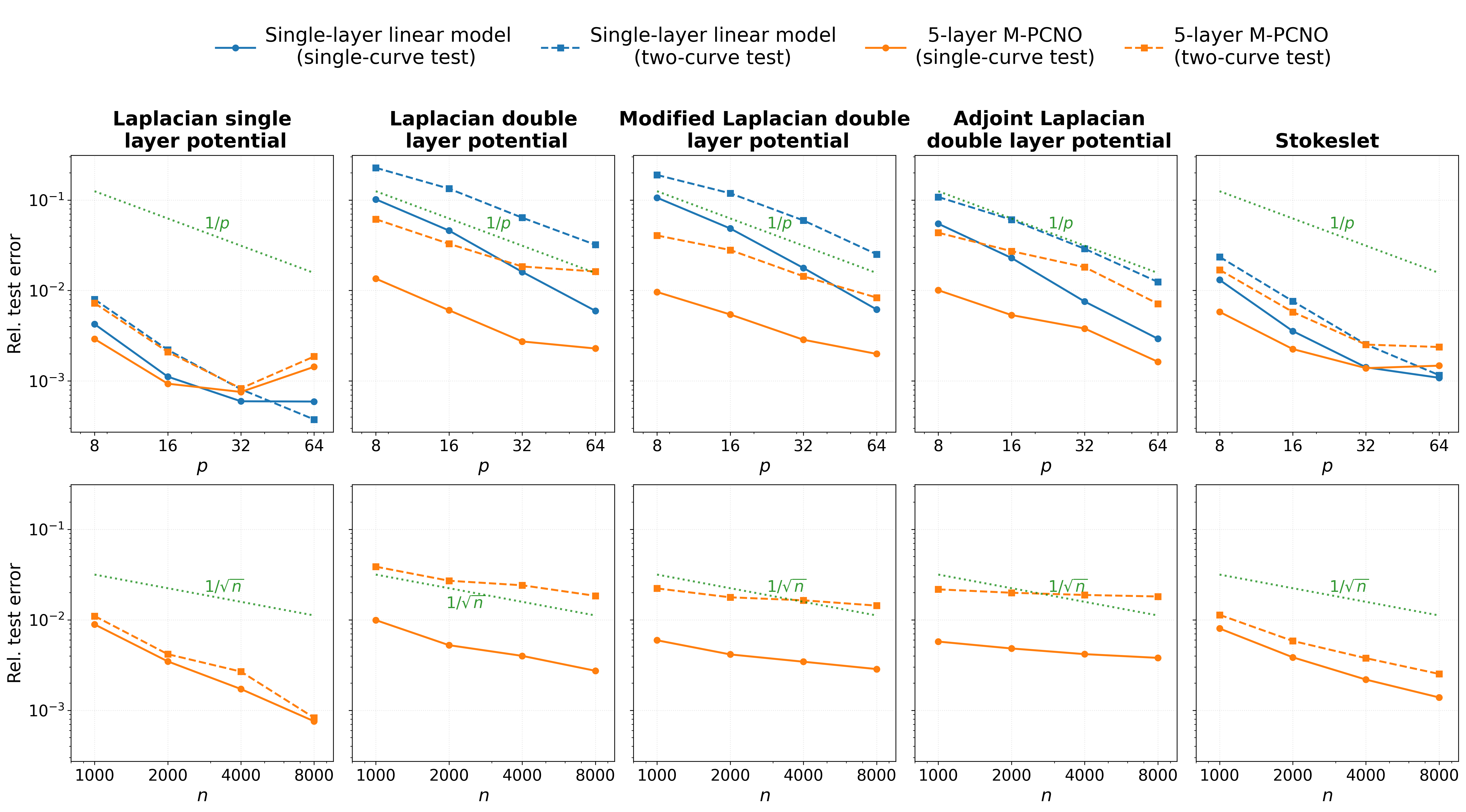}~~~~
    \caption{Kernel integrals: relative test errors as functions of the truncated mode number $p$ with training dataset size $n=8000$~(top row), and as functions of the training dataset size $n$ with $p=32$ fixed~(bottom row), for different models and test datasets. Each column corresponds to one kernel integral.}
    \label{fig:kernel_integral_error_plot}
\end{figure}

We compare two architectures. The first is a single multiscale point cloud neural layer with the nonlinear activation removed and serves as a baseline. The second is a 5-layer M-PCNO and is used to investigate the effects of depth and nonlinear composition. We vary the truncated mode number $p \in\{8, 16, 32, 64\}$ and training dataset size $n \in\{ 1000, 2000, 4000, 8000\}$.
The complete results are  summarized in \cref{fig:kernel_integral_error_plot}. Overall, M-PCNO generalizes
effectively across geometric and topological variations.  We observe that
\begin{enumerate}
\item For the single-layer linear model, the error scales as $p^{-1}$ in both single-curve in-distribution and two-curve out-of-distribution tests (see \cref{fig:kernel_integral_error_plot}-top), consistent with the theoretical result in \cref{theorem:approximation-error}.  
\item For 5-layer M-PCNO, the error is generally lower than that of the linear model and still roughly scales as $p^{-1}$. However, when 
$p$ becomes large, the available training data are insufficient to reliably train the more expressive model. As a result, the test error no longer decreases with increasing $p$ and instead plateaus (see \cref{fig:kernel_integral_error_plot}-top).
\item Increasing the training data size $n$ for 5-layer M-PCNO reduces the error following the scaling law $n^{-\beta}$. For the Laplacian single and double layer potentials and the Stokeslet, the convergence rate reaches the Monte Carlo rate ($\beta=\frac{1}{2}$), while other kernels exhibit slower convergence. We conjecture that kernels with weaker singularities converge more rapidly (see \cref{fig:kernel_integral_error_plot}-bottom).
\end{enumerate}

\subsection{Exterior Laplace Equation}
\label{sec:potential_flow}
We next consider two geometry-dependent exterior Laplace problems. Both problems involve kernel integrals and their resolvents, i.e., the solution of boundary integral equations, which are generally not translation-invariant.

\subsubsection{Exterior Neumann Problem}
Let $\Omega\subset\bR^2$ be a bounded domain with smooth boundary. We
consider
\begin{equation}
\begin{split}
       \Delta \Phi &= 0 \qquad \textrm{in} \qquad \Omega^c,\\
        \frac{\partial \Phi(x)}{\partial n_x} &= f(x) \qquad \textrm{on} \qquad \partial \Omega, \qquad 
        \int_{\partial \Omega} f(x) \dd x = 0, \\
        \lim_{|x|\to \infty }\Phi(x) &= 0.
\end{split} 
\end{equation}
where $n_x$ denotes the outward unit normal to $\Omega$. The goal is to learn the Neumann-to-Dirichlet map:
$$\calG^{\dagger}: (f, \partial \Omega) \rightarrow \Phi(x)\bigl|_{\partial\Omega}.$$ 
The solution admits the single layer potential representation
\begin{equation}
\label{eq:single_layer}
\Phi(x) = \int_{\partial \Omega} \eta(y)  \kappa(x - y)  \dd S_y, \qquad \kappa(x- y) = -\frac{1}{2\pi} \ln \lVert x - y \rVert_2,
\end{equation}
where $\eta$ is an unknown density on $\partial\Omega$. The density  
$\eta$ satisfies the following Fredholm integral equation of the second kind on  $\partial\Omega$:
\begin{equation}
\label{eq:Fredholm_integral_equation}
f(x) =    -  \frac{1}{2}\eta(x) +  \int_{\partial \Omega} \eta(y) \nabla_x \kappa(x - y) \cdot n_x \dd S_y. 
\end{equation}
Thus, the Neumann-to-Dirichlet map involves the composition of the operators related to \cref{eq:single_layer,eq:Fredholm_integral_equation}. Since the resolvent associated with \cref{eq:Fredholm_integral_equation} is geometry dependent and generally cannot be represented by a single
translation-invariant convolution.  Instead, it admits a Neumann series expansion involving repeated operator compositions, so sufficient depth is required for accurate approximation.

The experimental setup, including the geometries and input functions, is identical to that in \cref{ssec:kernel_integral}. The models are trained on the single-curve dataset and evaluated on both the single-curve and the two-curve dataset.

\begin{table}[htbp]
\begin{center}
\resizebox{0.85\textwidth}{!}{%
		\begin{tabular}{c|c|c|c|c}
			\Xhline{1.1pt}
			\diagbox{$L$}{$n$}  & $1000$ & $2000$ & $4000$ & $8000$\\ \Xhline{1.1pt}
            4 & 3.7753 \quad    9.3654   & 2.3512    \quad 8.6415   & 1.3535    \quad 6.111   & 0.9740    \quad 5.2895  \\
            \hline
			5 & 3.5967 \quad 9.3321   & 2.1442 \quad 7.3943   & 1.2108 \quad 5.764   & 0.9217 \quad 4.9039   \\ \Xhline{1.1pt}
		\end{tabular}
        }
	\end{center}
    \caption{Neumann-to-Dirichlet map for the exterior Laplacian learned with $p=32$ fixed for different layer number $L$ and training dataset size $n$. Each entry reports single-curve and two-curve Relative $L^2$ test errors ($\times 10^{-2}$).}
    \label{tab:exterior_neumann_layers}
\end{table}

To examine the effect of network depth, we fix $p=32$ and compare M-PCNOs with $L=4$ and $L=5$ layers over $n=\{1000,2000,4000,8000\}$ training samples. The results are reported in
\cref{tab:exterior_neumann_layers}. 
We next fix $L=5$ and vary $p \in\{ 8, 16, 32, 64 \}$ and $n \in \{ 1000, 2000, 4000, 8000\}$ to study the dependence of the test error on
spectral resolution and training set size.
The complete results are 
summarized in \cref{fig:exterior_neumann}.
We observe that
\begin{enumerate}
    \item
    At fixed $p=32$, increasing the depth from $L=4$ to $L=5$
    consistently reduces both the single-curve and two-curve test errors for every training-set size.
    This result is consistent with the need to represent a solution of boundary integral operator through repeated compositions.

    \item
    On the two-curve test set, the error initially decreases with $p$ at an
    empirical rate slightly slower than $p^{-1}$. At the largest value of
    $p$, the improvement plateaus or reverses, indicating that finite-data
    and optimization errors dominate the remaining spectral truncation
    error.
    The single-curve errors are substantially smaller and exhibit
    weaker dependence on $p$; therefore, low in-distribution error alone
    does not guarantee geometric or topological generalization.

    \item
    Over the sample sizes considered, the single-curve and two-curve test
errors decrease with $n$ and, for sufficiently large $p$, exhibit an
approximate power-law dependence on $n$, with different empirical slopes
for the two test distributions.
\end{enumerate}

\begin{figure}
    \centering
\includegraphics[width=0.49\linewidth]{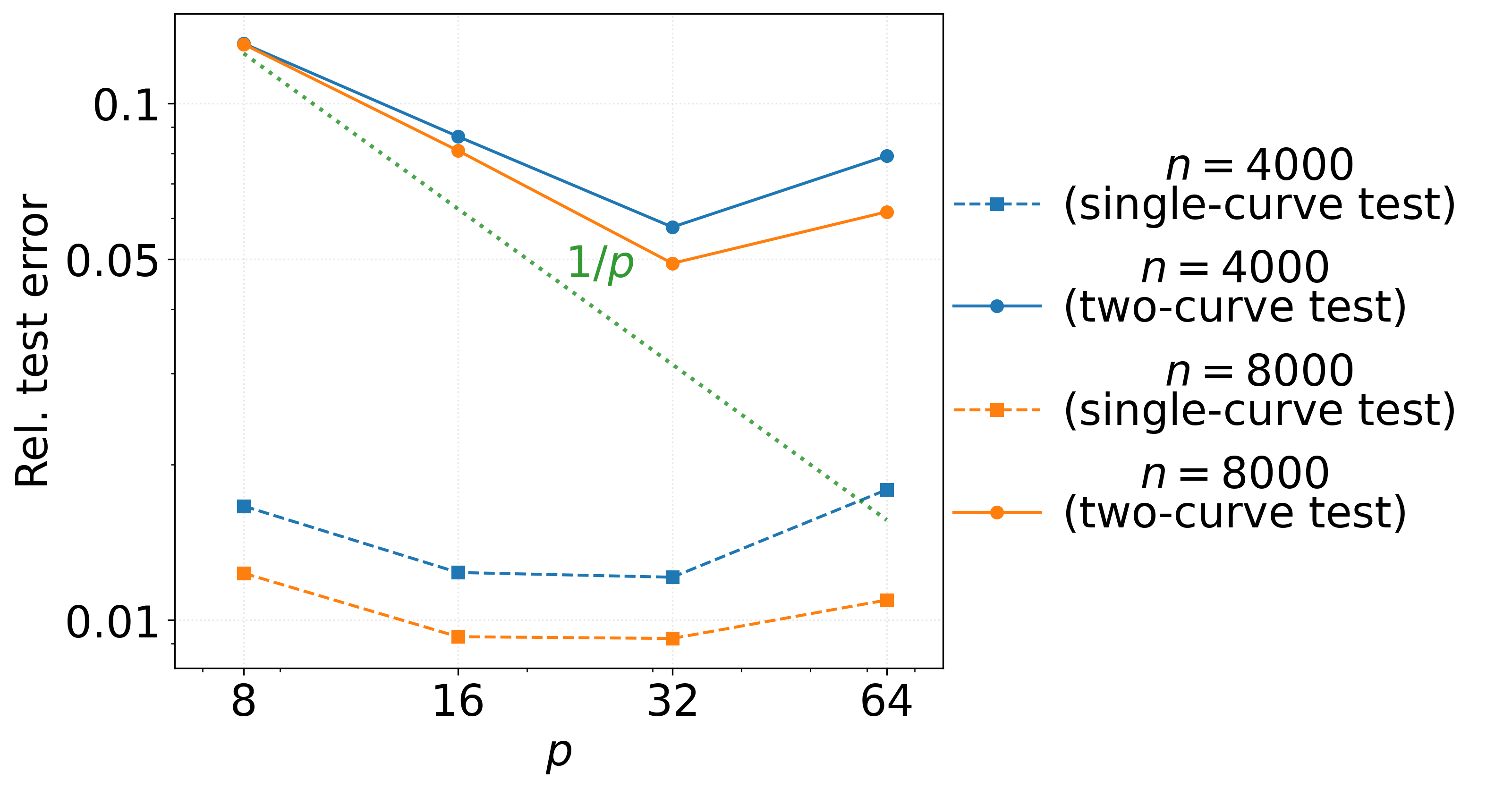}
\includegraphics[width=0.49\linewidth]{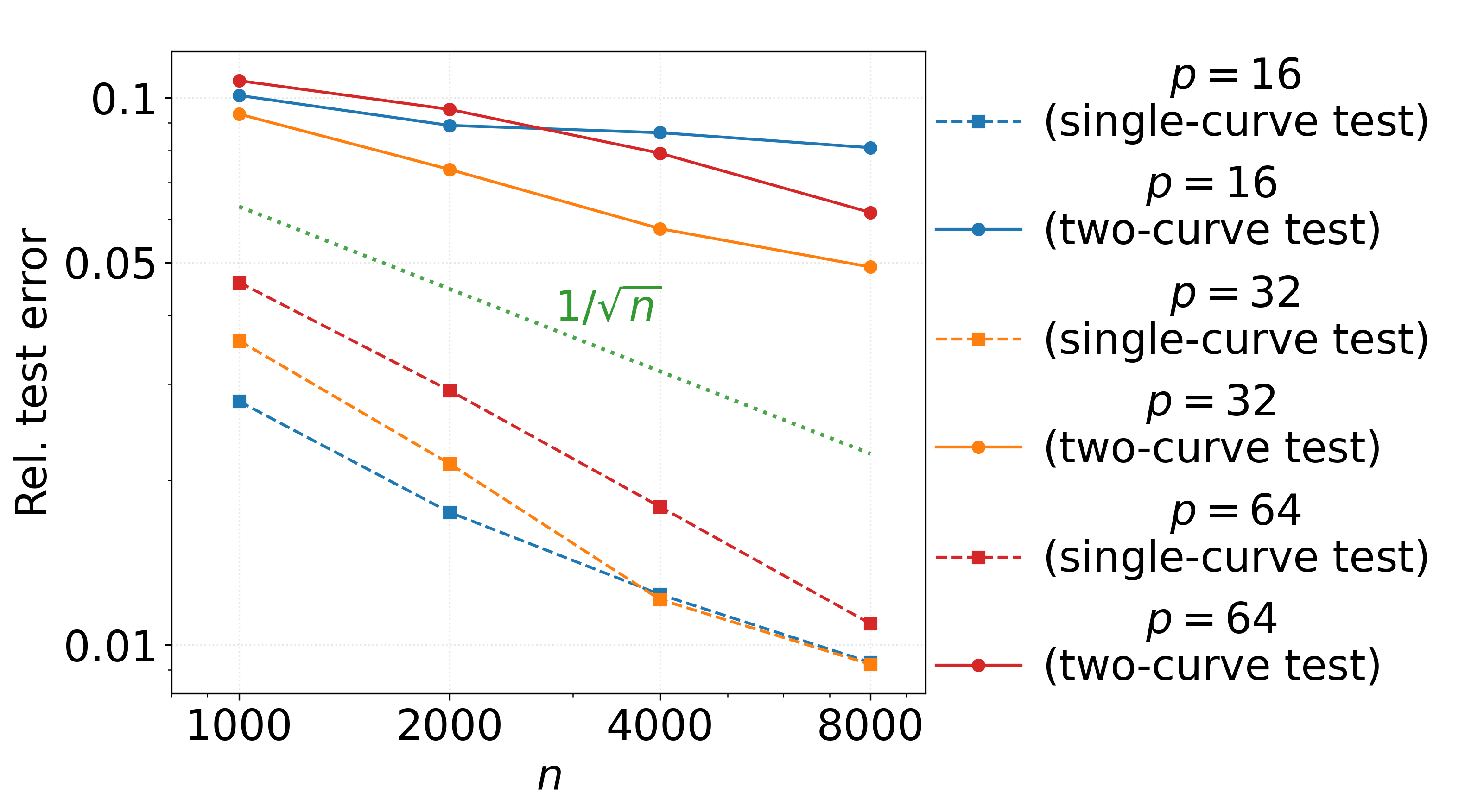}
    \caption{Neumann-to-Dirichlet map for the exterior Laplacian learned with a 5-layer M-PCNO: relative $L^2$ test errors as functions of the truncated mode number $p$ (left) and the training dataset size $n$ (right).}
    \label{fig:exterior_neumann}
\end{figure}

\subsubsection{Potential Flow Problem}
We next consider potential flow around a 3D object, discussed in the introduction \cref{eq:panel_phi_decomposition,eq:potential-flow-bie,eq:potential-flow-velocity,eq:pressure-coefficient}.  Our goal is to learn the aerodynamic map, from the inflow condition and boundary geometry to the pressure coefficient
$$\calG^{\dagger}: (v_{\infty}, \partial \Omega) \rightarrow C_p\Bigl|_{\partial\Omega}.$$
The freestream inflow condition is $v_{\infty} = [1,\,0,\,0]$ and scaling the inflow velocity does not affect $C_p$.

\begin{figure}
    \centering
\includegraphics[width=0.9\linewidth]{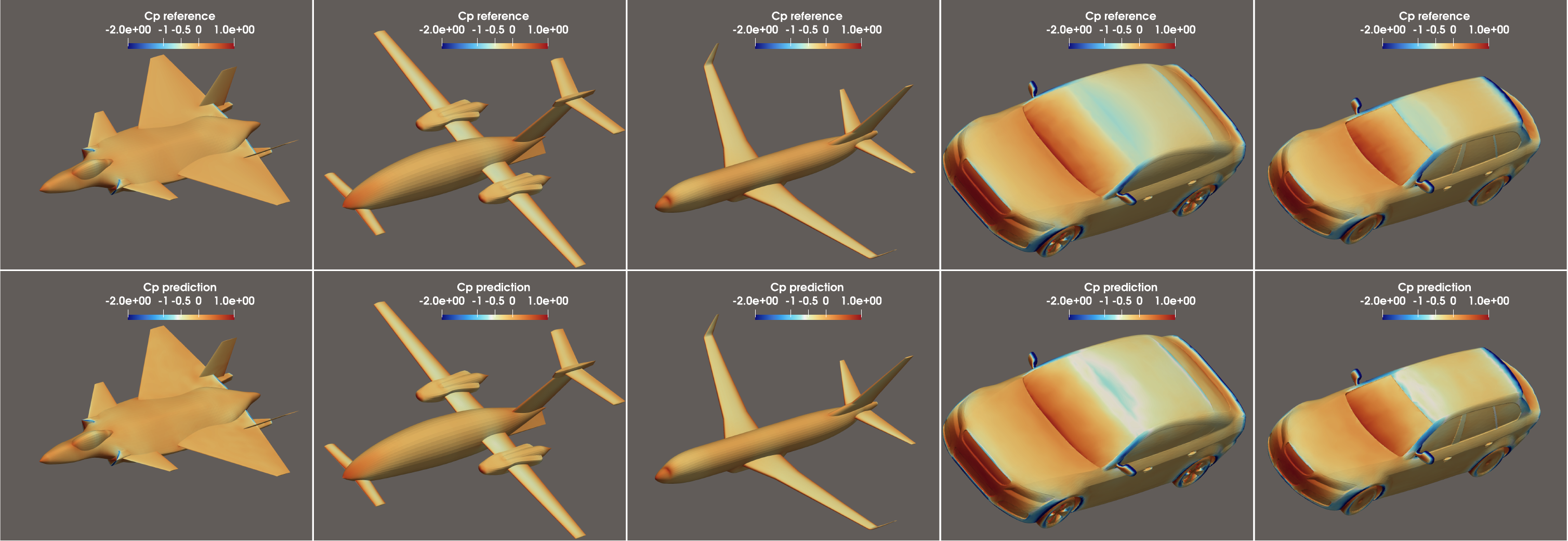}~~~~
    \caption{Representative results for the potential flow problem using a 5-layer M-PCNO with $p=16$ and $n=4000$. Each column shows the reference solution (top) and the prediction (bottom) for test cases with the largest relative error (leftmost, fighter jet), the median relative error (left-middle, turboprop), and three randomly selected samples.}
    \label{fig:potential_flow}
\end{figure}

The dataset contains approximately $5000$ ground-vehicle geometries from
DrivAerNet++~\cite{elrefaie2025drivaernetlargescalemultimodalcar} and $6000$
aircraft geometries (see \cref{fig:potential_flow}). Each geometry is discretized using
approximately $4\times10^4$ surface elements. The reference pressure
coefficient is computed using a panel method.

We train 5-layer M-PCNO models with $p = 16$ using $n \in\{ 1000, 2000, 4000\}$ training geometries randomly sampled from the full dataset. Performance is evaluated on a separate test set of $500$ geometries. The relative $L^2$ test errors reported in \cref{tab:potential_flow_N_vs_e} follow a scaling law proportional to $n^{-0.167}$. Representative results for the model trained with $n=4000$ are shown in \cref{fig:potential_flow}. The mean relative $L^2$ test error is $8.054\%$ across this diverse geometric dataset. 
These results demonstrate that a single surrogate model has the potential to accommodate a wide range of geometries.


\begin{table}[htbp]
    \begin{center}
        \begin{tabular}{c|ccc}
        \Xhline{1.1pt}
            $n$  & $1000$ & $2000$ & $4000$\\ \hline
            Rel. test error~($\times 10^{-2}$)  & 10.15   &   8.557 & 8.054   \\\Xhline{1.1pt}
        \end{tabular}
    \end{center}
    \caption{Potential flow problem with a 5-layer M-PCNO. Relative $L^2$ test errors ($\times 10^{-2}$) for different training dataset sizes $n$ with the truncated mode number fixed at $p=16$.}
    \label{tab:potential_flow_N_vs_e}
\end{table}

\subsection{Turbulent Flow Problem}

We finally consider steady turbulent flow around a 3D
vehicle. The time-averaged flow is modeled by the incompressible
Reynolds-averaged Navier-Stokes equations
\begin{align}
    (v\cdot\nabla)v+\frac{1}{\rho}\nabla p-\nu\nabla^2 v+\nabla\cdot \tau^R=0, \\ 
    \nabla\cdot v=0.
\end{align}
Here $v$ and $p$ denote the time-averaged velocity and pressure,
respectively, $\rho$ is the constant density, $\nu$ is the kinematic
viscosity, and $\tau^R$ is the Reynolds-stress tensor. The turbulence
closure is supplied by a $k$--$\varepsilon$ model. We impose the no-slip
condition $v=0$ on the vehicle and a uniform freestream velocity of
$72\,\mathrm{km/h}$ at the far-field boundary. The goal is to learn a surrogate map from the vehicle surface to the surface pressure:

\begin{equation}
 \calG^\dagger : \partial \Omega \mapsto p|_{\partial\Omega}.
\end{equation}

We use the \textit{ShapeNet-Car} dataset from \cite{umetani2018learning}, which contains sports cars, sedans, and SUVs collected from \cite{chang2015shapenet}.
After removing side mirrors,
spoilers, and tires, the dataset contains $611$ geometries, each represented by a surface mesh with approximately $3.7\times10^3$ points.

\begin{table}[htbp]
\centering
\begin{tabular}{c|cccc}
\toprule
Method &  GINO~\cite{li2024geometry} & Transolver~\cite{wu2024transolver} & MSPT~\cite{curvo2026mspt} & \textbf{M-PCNO} \\
\hline
Rel. test error~($\times 10^{-2}$) 
 & 7.12 
 & 7.45  
 & 7.41  
 & \textbf{6.36} \\
\bottomrule
\end{tabular}
\caption{Comparison on the \textit{ShapeNet-Car} surface pressure prediction task. The reported metric is the relative $L^2$ error. Results for baseline methods are taken from their original papers.}
\label{tab:shapenetcar_pressure}
\end{table}

We compare a 7-layer M-PCNO with the Fourier neural operator variant  GINO~\cite{li2024geometry}, and transformer-based architectures Transolver~\cite{wu2024transolver} and MSPT~\cite{curvo2026mspt}. The relative $L^2$ test errors are reported in~\cref{tab:shapenetcar_pressure}. The empirical distribution of test errors is illustrated in the left panel of \cref{fig:car-error}. 
To examine the large-error
outliers, the middle and right columns of \cref{fig:car-error} show the
test geometries with the largest and median relative errors, respectively.
We observe that 
\begin{enumerate}
    \item  Although its architecture is motivated by linear kernel integral
    operators, M-PCNO achieves a relative $L^2$ test error of $6.36\%$ on
    this nonlinear problem. This is competitive with respect to the methods compared in
    \cref{tab:shapenetcar_pressure}.

\item The error distribution in the left panel of \cref{fig:car-error} contains several notable outliers. For the largest-error geometry, the principal discrepancy is concentrated near an unusually thin region at the bottom of the vehicle. This observation suggests that atypical geometric features may contribute to large prediction errors and motivates the development of geometry-aware uncertainty quantification and out-of-distribution detection.

\item 
Preprocessing and inference for one geometry require approximately $1.17$ and $0.054$ seconds on GPU, respectively. By comparison, the parallelized Navier-Stokes solver reported in \cite{umetani2018learning} requires approximately $50$ minutes per geometry. Although these timings were obtained under different hardware settings, they indicate a substantial reduction in online evaluation time.
\end{enumerate}

\begin{figure}[htbp]
     \centering
     \includegraphics[width=0.9\textwidth]{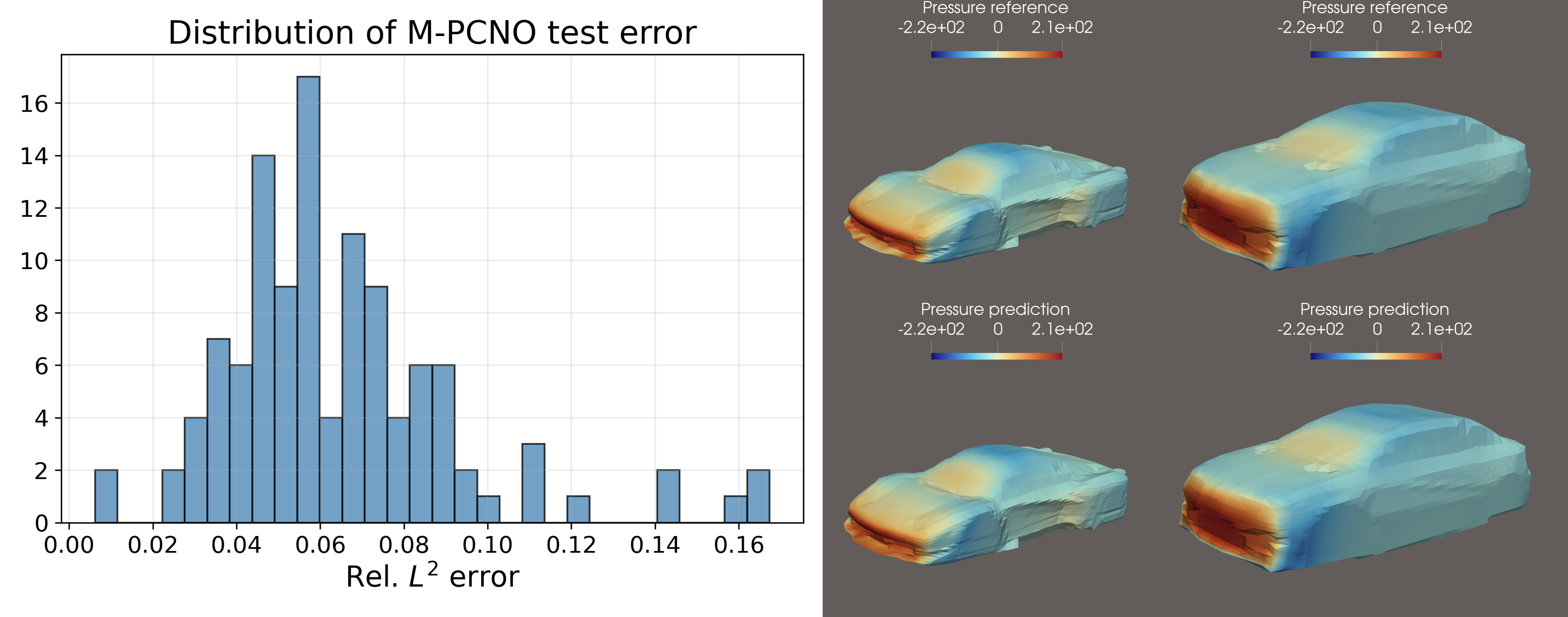}\\
     \caption{Error analysis for 3D turbulent flow over a vehicle. 
     From left to right: the distribution of test errors, the test case with the largest error, and a representative median error case. For the two visualized cases, the reference solution is shown in the top row and the M-PCNO prediction in the bottom row.}
     \label{fig:car-error}
\end{figure}

\section{Conclusion}
\label{sec:conclusion}

We study neural operators on variable, nonparametric geometries from a kernel integral perspective. This viewpoint motivates the M-PCNO architecture and yields approximation error bounds for the underlying linear integral operators. Numerical experiments demonstrate robust generalization across diverse geometries together with favorable computational efficiency. An important theoretical direction is to extend this framework to nonlinear PDE-induced operators by analyzing compositions of kernel-integral layers and nonlinear activations. On the application side, developing reliable uncertainty quantification and out-of-distribution detection for previously unseen geometries remains a key challenge.

 \section*{Acknowledgments}
We acknowledge the support of the high-performance computing platform of Peking University.

\appendix

\section{Proof of \cref{theorem:approximation-error}}
\label{sec:proof-theorem:approximation-error}
\begin{proof}
    Since $\kappa \in L^1(B_2)$ and is periodic, it has Fourier coefficients
    \begin{equation}
        |\hat{\kappa}_k| = \Bigl| \int_{B_2} \kappa(x) e^{-2\pi i  k \cdot x} \dd x \Bigr| \leq  \lVert \kappa \rVert_{L^1(B_2)}, \qquad k  \in \bZ^d.
    \end{equation}
Next, we show that $\kappa_{\rm long}$ is well defined on $\bR^d$ and bounded. 
Decomposing $\bR^d$ into translated cubes and using periodicity of $\kappa$, we obtain
{\small\begin{equation}
\label{eq:kappa_bound}
    \begin{split}
        \int_{\bR^d} |\kappa(x-y)|\rho_\delta(y)\dd y
&=\sum_{n\in\bZ^d}\int_{[-\frac{1}{2},\frac{1}{2}]^d+n}|\kappa(x-y)|\rho_\delta(y)\dd y\\
&\leq\lVert\kappa\rVert_{L^1(B_2)}\sum_{n\in\bZ^d}\max_{[-\frac{1}{2},\frac{1}{2}]^d+n}\rho_\delta(y)\\
&\leq \lVert \kappa \rVert_{L^1(B_2)} 
\frac{2^d}{(2\pi \delta^2)^ {d/2}} 
\sum_{n_1 =0}^{\infty}\cdots\sum_{n_{d} =0}^{\infty} \max_{ [-\frac{1}{2},\frac{1}{2}]^d + n }  e^{-\frac{\lVert y\rVert_2^2}{2\delta^2}}   \\
&\leq\lVert\kappa\rVert_{L^1(B_2)}\frac{2^d}{(2\pi \delta^2)^ {d/2}}  \Bigl(1+\sum_{n_1=1}^\infty e^{-\frac{1}{2\delta^2}(n_1 - \frac{1}{2})^2}\Bigr)^{d}\\
    &\leq C(\kappa,d,\delta).
    \end{split}
\end{equation}
}In the second inequality, we use symmetry to restrict $n$ in the first quadrant.  In the third inequality, we use the decomposition  $\lVert y\rVert_2^2=\sum_{i=1}^d y_i^2$ and bound each component separately.
So $\kappa_{\rm long} = \kappa * \rho_{\delta}$ is well-defined everywhere and is bounded. 
Next, we establish each property separately.

\paragraph{Proof of \textnormal{(P1)}}
For the mollified kernel $\kappa_{\rm long}$, we compute its Fourier coefficients:
\begin{equation}
\label{eq:kappa_k}
\begin{split}
        \widehat{\kappa_{\rm long}}_k &=  \int_{B_2}\kappa_{\rm long}e^{-2\pi i k \cdot x } \dd x= \int_{B_2}\dd x\int_{\mathbb{R}^d}\kappa(x-y)\rho_\delta(y)e^{-2\pi ik \cdot x} \dd y\\
&=\int_{\mathbb{R}^d}\rho_\delta(y)e^{-2\pi i k \cdot y}\int_{B_2-y} \kappa(x)e^{-2\pi i k \cdot x} \dd x \dd y\\
&=\hat{\kappa}_k e^{-2\pi^2\delta^2\lVert k\rVert_2^2}. 
\end{split}
\end{equation}
The exchange of integrals is justified by Tonelli's and Fubini's theorems, using the bound in \cref{eq:kappa_bound}. Hence, mollification damps the Fourier coefficients of $\kappa$ by a Gaussian factor.
We now derive an $L^{\infty}$ bound on the Fourier tail of $\kappa_{\rm long}$, i.e., on the partial sum over all modes $k\in \bZ^d:\lVert k\rVert_\infty>p$ :
{\small \begin{align}
\sum_{k:\lVert k\rVert_\infty>p} |\widehat{\kappa_{\rm long}}_k| &\leq  \lVert \kappa \rVert_{L^1(B_2)}   2^d d \sum_{k_1 =0}^{\infty}\cdots \sum_{k_{d-1} =0}^{\infty}\sum_{k_{d} =p+1}^{\infty} e^{-2\pi^2\delta^2\lVert k \rVert_2^2} \nonumber\\
&\leq  \lVert \kappa \rVert_{L^1(B_2)} 2^d d  \Bigl(1 + \int_{0}^{\infty} e^{-2\pi^2\delta^2k^2} dk   
\Bigr)^{d-1} \Bigl(\int_{p}^{\infty} e^{-2\pi^2\delta^2k^2} dk   
\Bigr) \label{eq:k_long_tail} \\
&\leq \lVert \kappa \rVert_{L^1(B_2)}  2^d d \Bigl(1 + \frac{1}{2\sqrt{2\pi} \delta}   
\Bigr)^{d-1} \frac{e^{-2\pi^2\delta^2p^2}}{4\pi^2\delta^2 p} \quad \textrm{using } \delta <\frac{1}{2}  \nonumber\\
&\leq\frac{d\lVert\kappa\rVert_{L^1(B_2)}}{2\pi^2}(1+\frac{1}{\sqrt{2\pi}})^{d-1}\frac{e^{-2\pi^2\delta^2p^2}}{\delta^{d+1} p}. \nonumber
\end{align}
}For the second inequality, we substitute \cref{eq:kappa_k} and consider $k$ in the first quadrant with $k_d > p$. 
For the third inequality, we use the decomposition $\lVert k\rVert_2^2=\sum_{i=1}^d k_i^2$ and bound each component separately. 
Finally, we consider the linear operator related to the smooth part
\begin{equation}
\label{eq:operator_long_tail}
\begin{aligned}
    \lVert\widehat{\calK}_{\rm long}f - \calK_{\rm long}f\rVert_{L^\infty(\partial\Omega)}&= 
\sup_{x\in\partial \Omega} \Bigl|\int_{\partial\Omega}\sum_{k:\lVert k\rVert_\infty>p}\widehat{\kappa_{\rm long}}_k e^{2\pi ik\cdot(x-y)}f(y)\,\dd y\Bigr| \\
&\leq \int_{\partial\Omega}\sum_{k:\lVert k\rVert_\infty>p}|\widehat{\kappa_{\rm long}}_k||f(y)|\,\dd y\\
&\leq \sum_{k:\lVert k\rVert_\infty>p} |\widehat{\kappa_{\rm long}}_k| \lVert f\rVert_{L^1(\partial\Omega)}. 
\end{aligned}
\end{equation}
Combining \cref{eq:k_long_tail,eq:operator_long_tail} proves
\cref{eq:kappa_long_conclusion}.

\paragraph{Proof of \textnormal{(P2)}}
For the short-range component, let $0 < \delta_p \leq \frac{1}{2}$. For every $x \in B_2$ such that $\lVert x\rVert_2 > \delta_p$, we decompose the integral into two regions:
{\small \begin{align}
    \kappa_{\rm short}(x) 
    &= \int_{\bR^d} \Bigl(\kappa(x) - \kappa(x-y) \Bigr)\rho_{\delta}(y) \dd y \nonumber\\
    &= \int_{\lVert y \rVert_2 < \delta_p} \Bigl(\kappa(x) - \kappa(x-y) \Bigr)\rho_{\delta}(y)\dd y + \int_{\lVert y \rVert_2 > \delta_p} \Bigl(\kappa(x) - \kappa(x-y) \Bigr)\rho_{\delta}(y) \dd y. \label{eq:short-range-split}
\end{align}
}For the first term in \cref{eq:short-range-split}, a Taylor expansion of $\kappa(x-y)$ gives, for some $\theta\in(0,1)$,
{\small
\begin{align}
\Bigl|\int_{\lVert y \rVert_2 < \delta_p} \Bigl(\kappa(x) &- \kappa(x-y) \Bigr)\rho_{\delta}(y)\dd y\Bigr|
\label{eq:kappa_short_part1}\\
&\leq \Bigl|\int_{\lVert y \rVert_2 < \delta_p} \bigl(\nabla\kappa(x) \cdot y\bigr)\rho_\delta(y) \dd y\Bigl| 
+ \frac{1}{2} \int_{\lVert y \rVert_2 < \delta_p} \bigl\lVert \nabla^2 \kappa\bigl(x-\theta(x,y)y\bigr)\bigr\rVert_2 \bigl\lVert y \bigr\rVert_2^2 \rho_{\delta}(y) \dd y\nonumber\\
&\leq 0 + \frac{d\delta^{2}}{2} \sup_{\lVert z\rVert_2>\lVert x\rVert_2-\delta_p}\lVert \nabla^2 \kappa(z)\rVert_2 \nonumber \quad \textrm{using assumption} ~\ref{assumption:kappa_assumption}\\
&\leq \frac{d\delta^{2}}{2} C_{\kappa}(\frac{1}{\lVert x\rVert_2-\delta_p} )^{d+1}. \nonumber
\end{align}
}In the second inequality, we use $\lVert x - \theta y\rVert_2 \geq \lVert x\rVert_2 - \delta_p$ and $\int_{\mathbb{R}^d} \lVert y\rVert_2^2\rho_\delta(y)\dd y=\mathrm{tr}[\int_{\mathbb{R}^d} yy^T\rho_\delta(y)\dd y]=d\delta^{2}$.
For the second term in \cref{eq:short-range-split}, we have the decomposition
{\small\begin{equation}
\label{eq:kappa_short_part2}
\begin{split}
\Bigl|\int_{\lVert y \rVert_2 > \delta_p} \Bigl(\kappa(x) &- \kappa(x-y) \Bigr)\rho_{\delta}(y)\dd y\Bigr| \\
&\leq \int_{\lVert y \rVert_2 > \delta_p} |\kappa(x)|\rho_{\delta}(y)\dd y + 
\int_{\lVert y \rVert_2 > \delta_p} |\kappa(x-y)|\rho_{\delta}(y)\dd y.
\end{split}
\end{equation}
}The first term in \cref{eq:kappa_short_part2} is a Gaussian tail. Using spherical coordinates, we have
{\small\begin{align}
        |\kappa(x)|\int_{\lVert y \rVert_2 > \delta_p} \rho_\delta(y)\dd y &= |\kappa(x)|\frac{1}{(2\pi)^{d/2}}\int_{\lVert z \rVert_2 > \delta_p/\delta} e^{-\frac{\lVert z \rVert_2^2}{2}} \dd z  \nonumber
        \\&=|\kappa(x)|\frac{2\pi^{d/2}}{(2\pi)^{d/2}\Gamma(d/2)}\int_{\delta_p/\delta}^{\infty}r^{d-1}e^{-\frac{r^2}{2}}dr \label{eq:kappa_short_part2_term1}\\
    & \leq |\kappa(x)|\frac{2^{1-d/2}}{\Gamma(d/2)}2^{d/2-1}\Gamma(\frac{d}{2},\frac{\delta_p^2}{2\delta^2}) \quad \textrm{using assumption}~ \ref{assumption:kappa_assumption} \nonumber\\
    & \leq \frac{C_{\kappa}}{\lVert x\rVert_2^{d-1}}\frac{1}{\Gamma(d/2)}\Gamma(\frac{d}{2},\frac{\delta_p^2}{2\delta^2}),   
    \nonumber 
    \end{align}
}where $\Gamma$ is the Gamma function. To bound the second term in \cref{eq:kappa_short_part2} (with $\delta_p \leq \frac{1}{2}$), we decompose
$\{y : \lVert y \rVert_2 > \delta_p\}$ into $\{\lVert y \rVert_2 > \delta_p\} \cap[-\frac{1}{2},\frac{1}{2}]^d$ and the reminder, covered by the translated cubes $\bigcup_{n\in \bZ^d\backslash\{0\}} \{[-\frac{1}{2},\frac{1}{2}]^d + n\}$, and then use the periodicity of $\kappa$:
{\small  
\begin{align}
     \int_{\lVert y \rVert_2 > \delta_p} |\kappa(x-y)| \rho_{\delta}(y) \dd y 
     &= \sum_{n\in \bZ^d} \int_{\{\lVert y \rVert_2 > \delta_p\} \cap \{[-\frac{1}{2},\frac{1}{2}]^d + n\}} |\kappa(x-y)| \rho_{\delta}(y) \dd y  \nonumber\\
&\leq \lVert \kappa \rVert_{L^1(B_2)} \sum_{n\in \bZ^d} \max_{\{\lVert y \rVert_2 > \delta_p \}\cap \{[-\frac{1}{2},\frac{1}{2}]^d + n\}}  \rho_{\delta}(y)  \label{eq:kappa_short_part2_term2_1}
\\
&= \lVert \kappa \rVert_{L^1(B_2)} 
\Bigl(
\frac{1}{(2\pi \delta^2)^ {d/2}} e^{-\frac{ \delta_p^2 }{2 \delta^2}} + 
\sum_{n\in \bZ^d \setminus\{0\}} \max_{ [-\frac{1}{2},\frac{1}{2}]^d + n }  \rho_{\delta}(y)  
\Bigr). \nonumber
\end{align}
}For the sum over translated cubes in \cref{eq:kappa_short_part2_term2_1}, we have
{\small  
\begin{equation}
\label{eq:kappa_short_part2_term2_2}
    \begin{split}
\sum_{n\in \bZ^d \setminus\{0\}} \max_{ [-\frac{1}{2},\frac{1}{2}]^d + n }  \rho_{\delta}(y)  
&\leq  \frac{2^d d}{(2\pi \delta^2)^ {d/2}} 
\sum_{n_1 =0}^{\infty}\cdots \sum_{n_{d-1} =0}^{\infty}\sum_{n_{d} =1}^{\infty} \max_{ [-\frac{1}{2},\frac{1}{2}]^d + n }  e^{-\frac{\lVert y\rVert_2^2}{2\delta^2}}  \\
&\leq \frac{2^d d}{(2\pi \delta^2)^ {d/2}}  \bigl(1+\sum_{n_1=1}^\infty e^{-\frac{1}{2\delta^2}(n_1 - \frac{1}{2})^2}\bigr)^{d-1}\sum_{n_d=1}^\infty e^{-\frac{1}{2\delta^2}(n_d - \frac{1}{2})^2}
\\
&\leq \frac{2^d d}{(2\pi \delta^2)^ {d/2}} \frac{e^{-\frac{1}{8\delta^2}}} {(1-e^{-\frac{1}{8\delta^2}})^{d}}. 
    \end{split}
\end{equation}
}Here the first inequality follows by symmetry (restricting $n$ to the first quadrant with $n_d \geq 1$); the second inequality uses the decomposition $\lVert y\rVert_2^2=\sum_{i=1}^d y_i^2$ and bounds each component separately; and the last uses $\sum_{n_i=1}^\infty e^{-\frac{1}{2\delta^2}(\frac{2n_i-1}{2})^2}\leq\sum_{n_i=1}^\infty e^{-\frac{n_i}{8\delta^2}}=\frac{e^{-\frac{1}{8\delta^2}}}{ 1-e^{-\frac{1}{8\delta^2}} }$, which follows from $(2n_i-1)^2\geq n_i,\forall n_i\geq1$.
Substituting \cref{eq:kappa_short_part2_term2_2} into \cref{eq:kappa_short_part2_term2_1} and using $\delta_p \leq \frac{1}{2}$, we can bound the second term in \cref{eq:kappa_short_part2} as
{\small  
\begin{equation}
\label{eq:kappa_short_part2_term2}
    \begin{split}
     \int_{\lVert y \rVert_2 > \delta_p} |\kappa(x-y)| \rho_{\delta}(y) \dd y 
&\leq  \frac{\lVert \kappa \rVert_{L^1(B_2)} }{(2\pi \delta^2)^ {d/2}}
\Bigl(
 1 + \frac{2^d d}{(1-e^{-\frac{1}{8\delta^2}})^{d}}
\Bigr)   e^{-\frac{ \delta_p^2 }{2 \delta^2}}. 
    \end{split}
\end{equation}}

Substituting \cref{eq:kappa_short_part1,eq:kappa_short_part2_term1,eq:kappa_short_part2_term2} into \cref{eq:short-range-split} and choosing 
\begin{equation}
    \delta_p = c_d\lVert x\rVert_2 \leq \min\{\lVert x\rVert_2,\frac{1}{2}\} \qquad c_d=\frac{1}{\sqrt d}
\end{equation}
 yields the following bound for the short-range component, valid for any $x\in B_2\setminus\{0\}$:
{\small\begin{align*}
    \Bigl|\kappa_{\rm short}(x)\Bigl|\leq \frac{dC_{\kappa}\delta^2}{2(1-c_d)^{d+1}}\frac{1}{\lVert x\rVert_2^{d+1}}+
\frac{\lVert \kappa \rVert_{L^1(B_2)}}{(2\pi)^ {d/2}}  \bigl(1+ \frac{2^d d}{(1 - e^{-\frac{1}{8\delta^2}})^d}\bigr)\frac{e^{- \frac{c_d^2 \lVert x\rVert_2^2}{2 \delta^2}}}{\delta^d}+\frac{C_{\kappa}}{\lVert x\rVert_2^{d-1}\Gamma(d/2)}\Gamma\Bigl(\frac{d}{2},\frac{c_d^2}{2\delta^2}\lVert x\rVert_2^2\Bigl).
\end{align*}
}
Bringing the upper bound for the Gamma function from \cref{lemma:Gamma_function} into the last term, using $c_d=d^{-1/2}<1$ we obtain that for any $\lVert x \rVert_2 \geq \delta$ and $\delta<\frac{1}{2}$,  
{\small
\begin{equation}
\label{eq:kappa_short}
    \begin{aligned}
        \Bigl|\kappa_{\rm short}(x)\Bigl|
    &\leq\frac{dC_{\kappa}}{2(1-c_d)^{d+1}}\frac{\delta^2}{\lVert x\rVert_2^{d+1}}+\Bigl(\frac{\lVert\kappa\rVert_{L^1(B_2)}}{(2\pi)^{d/2}}(1+\frac{2^dd}{(1-e^{-\frac{1}{8\delta^2}})^d})+\frac{C_{\kappa}C_{d/2}}{\Gamma(\frac{d}{2})}\bigl(\frac{c_d^{d-2}}{2^{d/2-1}}\frac{\delta^2}{\lVert x\rVert_2}+\frac{2d\delta^{d+2}}{\lVert x\rVert_2^{d+1}}\bigr)\Bigr)\frac{e^{-\frac{1}{2d}\frac{\lVert x\rVert^2}{\delta^2}}}{\delta^d}\\
    &\leq\frac{dC_{\kappa}}{2(1-c_d)^{d+1}}\frac{\delta^2}{\lVert x\rVert_2^{d+1}}+\Bigl(\frac{\lVert\kappa\rVert_{L^1(B_2)}}{(2\pi)^{d/2}}(1+\frac{2^dd}{(1-e^{-1/2})^d})+\frac{C_{\kappa}C_{d/2}}{\Gamma(\frac{d}{2})}
    \bigl(
    \frac{1}{2^{d/2}}+d
    \bigr)\Bigl)\frac{e^{-\frac{1}{2d}\frac{\lVert x\rVert^2}{\delta^2}}}{\delta^d}.
    \end{aligned}
\end{equation}
}
\paragraph{Proof of \textnormal{(P3)}}
Using the estimation about spatial decay of $\kappa_{\rm short}$ in \cref{eq:kappa_short} with $\epsilon>\delta$ and the boundary tail estimate in \cref{lemma:boundary-tail} lead to 
{\small
\begin{align}
    \lVert \calK_{\rm short} f - \calK_{\rm short}^\epsilon f\rVert_{L^\infty(\partial\Omega)}&=\sup_x \Bigl|\int_{\partial\Omega\backslash B_\epsilon(x)}\kappa_{\rm short}(x-y)f(y)\,\dd S_y\Bigr|
    \nonumber\\
    &\leq \int_{\partial\Omega\backslash B_\epsilon(x)} \frac{1}{\lVert x - y\rVert_2^{d+1}} dS_y \lVert f\rVert_{L^{\infty}(\partial\Omega)}\frac{dC_{\kappa}\delta^2}{2(1-c_d)^{d+1}} + c_{\rm short}\frac{e^{- \frac{\epsilon^2}{2d \delta^2}}}{\delta^{d}} \lVert f\rVert_{L^1(\partial\Omega)}  \label{eq:local}
    \\
&\leq  \frac{C_{\rm tail}}{\epsilon^{2}} \frac{dC_{\kappa}\delta^2}{2(1-c_d)^{d+1}}\lVert f\rVert_{L^\infty(\partial\Omega)} + c_{\rm short}\frac{e^{- \frac{\epsilon^2}{2d \delta^2}}}{\delta^{d}}  \lVert f\rVert_{L^1(\partial\Omega)}, \nonumber
\end{align}
}
where $C_{\rm tail}$ depends on $C_L$ and $d$. We then approximate $\calK$ by
\begin{equation}
    \widehat{\calK}= \widehat{\calK}_{\rm long}+\widehat{\calK}^{\epsilon}_{\rm short}.
\end{equation}
Applying the error bounds in \cref{eq:operator_long_tail,eq:local,eq:local-short-range-error}, we decompose the $L^\infty$ error into three contributions:
{\small
\begin{equation}
\label{eq:operator_diff_0}
    \begin{aligned}
       \lVert \calK f-\widehat{\calK} f\rVert_{L^\infty(\partial\Omega)}\leq& \lVert \calK_{\rm long} f-\widehat{\calK}_{\rm long} f\rVert_{L^\infty(\partial\Omega)}+\lVert \calK_{\rm short} f-{\calK}_{\rm short}^\epsilon f\rVert_{L^\infty(\partial\Omega)}+\lVert {\calK}_{\rm short}^\epsilon f-\widehat{\calK}^{\epsilon}_{\rm short} f\rVert_{L^\infty(\partial\Omega)}\\
    \leq& \frac{C_{\rm long}}{\delta^{d+1}p}e^{-2\pi^2\delta^2p^2}\lVert f\rVert_{L^1(\partial\Omega)}+ C_{\rm tail}\frac{dC_{\kappa}\delta^2}{2\epsilon^{2}(1-c_d)^{d+1}}\lVert f\rVert_{L^\infty(\partial\Omega)} + c_{\rm short}\frac{e^{- \frac{c_d^2 \epsilon^2}{2 \delta^2}}}{\delta^{d}}\lVert f\rVert_{L^1(\partial\Omega)}
    \\
    &+C_{\rm short} \epsilon^{d+q} \lVert f\rVert_{W^{2,\infty}(\partial\Omega)}
    \\
    \leq& C\Bigl(\frac{e^{-2\pi^2\delta^2p^2}}{\delta^{d+1}p}   +  \frac{ \delta^2}{ \epsilon^{2}} +  \frac{e^{- \frac{c_d^2 \epsilon^2}{2 \delta^2}}}{\delta^{d}} + \epsilon^{d+q}\Bigr), 
    \end{aligned}
\end{equation}}
\noindent 
where $C = \max\Bigl\{
C_{\rm long}\lVert f\rVert_{L^1(\partial\Omega)}, 
\frac{C_{\rm tail}dC_{\kappa}}{2(1-c_d)^{d+1}}\lVert f\rVert_{L^\infty(\partial\Omega)}, 
c_{\rm short}\lVert f\rVert_{L^1(\partial\Omega)}
,C_{\rm short}  \lVert f\rVert_{W^{2,\infty}(\partial\Omega)}\Bigr\}$. By \cref{lemma:boundary-tail}, the measure of $\partial \Omega$ is bounded in terms of $C_L$ and $d$. Consequently, 
$C$ depends on $C_\kappa$, $\lVert \kappa \rVert_{L^1(B_2)}$, $C_L$,  $d$, and $\lVert f\rVert_{W^{2,\infty}(\partial\Omega)}$.

We then choose $\delta$ and $\epsilon$ to balance the two polynomially decaying terms in \cref{eq:operator_diff_0} while ensuring that the exponential terms decay rapidly. Specifically, set 
 \begin{equation}
  \delta = p^{-\gamma},\qquad  \epsilon = \delta^t = p^{-\gamma t},  
  \qquad t = \frac{2}{q+d+2},
  \end{equation} 
  with parameters $\gamma \in (0,1)$, and $p > 2^{\frac{1}{\gamma}}$.
These choices ensure $\delta < \frac{1}{2}$ and $\epsilon \geq \delta$ (since $t\in(0,1)$), as required for the short-range approximation \cref{eq:kappa_short_conclusion}. 
Substituting into \cref{eq:operator_diff_0} yields
{\small\begin{equation*}
    \begin{split}
       \lVert \calK f-\widehat{\calK} f\rVert_{L^\infty(\partial\Omega)} &\leq C\Bigl(\frac{e^{-2\pi^2\delta^2p^2}}{\delta^{d+1}p} + \delta^{-d}e^{-\frac{1}{2}c_d^2\delta^{2t-2}} +2\delta^{t(q+d)}\Bigl)\\
&\leq C \Bigl(p^{-1+\gamma(d+1)}e^{-2\pi^2p^{2-2\gamma}} + p^{d\gamma}e^{-\frac{1}{2}c_d^2p^{\gamma(2-2t)}} +2p^{-\gamma t(q+d)}\Bigl).
    \end{split}
\end{equation*}}
\noindent The bound consists of two exponentially decaying terms and one polynomially decaying term. For sufficiently large $p$, the polynomial term
$p^{-\gamma \Bigl(1 + \frac{q + d - 2}{q+d+2}\Bigr)}$ dominates the overall error.
\end{proof}

\begin{lemma}
\label{lemma:Gamma_function}
    For any $\alpha>0$, there exists a constant $C_\alpha$ depending only on $\alpha$ such that for all $x>0$,
$\Gamma(\alpha,x)=\int_x^\infty t^{\alpha-1}e^{-t} dt\leq C_\alpha (x^{\alpha-1} + x^{-1})e^{-x}$.
\end{lemma}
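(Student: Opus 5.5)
We need to prove that for $\alpha>0$ and all $x>0$, $\Gamma(\alpha,x)\le C_\alpha (x^{\alpha-1}+x^{-1})e^{-x}$. The plan is to split into the regimes $x\ge 1$ and $0<x<1$, because the two terms on the right play different roles. For large $x$ the incomplete Gamma function behaves like $x^{\alpha-1}e^{-x}$. For small $x$ it stays bounded by $\Gamma(\alpha)$ when $\alpha$ is fixed, and the term $x^{-1}e^{-x}\ge e^{-1}$ absorbs that constant.

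\emph{Small $x$.} For $0<x<1$ we simply bound
\[
\Gamma(\alpha,x)\le\Gamma(\alpha)=e\,\Gamma(\alpha)\,e^{-1}\le e\,\Gamma(\alpha)\,x^{-1}e^{-x}.
\]
The last step uses $x^{-1}e^{-x}\ge e^{-1}$ on $(0,1)$, which holds because $x^{-1}e^{-x}$ is decreasing. So any $C_\alpha\ge e\,\Gamma(\alpha)$ works here.

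\emph{Large $x$.} For $x\ge1$ substitute $t=x+s$, giving
\[
\Gamma(\alpha,x)=e^{-x}\int_0^\infty (x+s)^{\alpha-1}e^{-s}\,\dd s.
\]
If $\alpha\le1$, then $(x+s)^{\alpha-1}\le x^{\alpha-1}$, so the integral is at most $x^{\alpha-1}$. If $\alpha>1$, use $(x+s)^{\alpha-1}\le 2^{\alpha-1}\max\{x,s\}^{\alpha-1}\le 2^{\alpha-1}(x^{\alpha-1}+s^{\alpha-1})$. Integrating against $e^{-s}$ gives at most $2^{\alpha-1}\bigl(x^{\alpha-1}+\Gamma(\alpha)\bigr)$. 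Since $x\ge1$ and $\alpha-1>0$, we have $\Gamma(\alpha)\le\Gamma(\alpha)x^{\alpha-1}$. In both cases the bound is $C'_\alpha x^{\alpha-1}e^{-x}$ with $C'_\alpha=2^{\max(\alpha-1,0)}(1+\Gamma(\alpha))$.

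Taking $C_\alpha=\max\{e\,\Gamma(\alpha),\,C'_\alpha\}$ and using that both terms on the right are nonnegative combines the two regimes. The only mildly delicate point is the case $\alpha>1$ at large $x$, where the polynomial factor $(x+s)^{\alpha-1}$ must be split so that the $s$-part still yields a multiple of $x^{\alpha-1}$. The restriction $x\ge1$ handles exactly this.
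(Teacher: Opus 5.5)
Your proof is correct, but it takes a different route from the paper's. The paper argues by induction on $\lceil\alpha\rceil$. For $\alpha\in(0,1]$ it uses $t^{\alpha-1}\le x^{\alpha-1}$; this is your $\alpha\le1$ bound, but applied for all $x>0$, so no $x^{-1}$ term is needed there. For $\alpha>1$ it integrates by parts to get $\Gamma(\alpha,x)=x^{\alpha-1}e^{-x}+(\alpha-1)\Gamma(\alpha-1,x)$ and applies the induction hypothesis. It then absorbs the resulting $x^{\alpha-2}$ via $x^{\alpha-2}+x^{-1}\le 2(x^{\alpha-1}+x^{-1})$, which yields the recursively defined constant $C_\alpha=1+2(\alpha-1)C_{\alpha-1}$. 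You instead split at $x=1$ from the outset. For $x<1$ you use the crude bound $\Gamma(\alpha,x)\le\Gamma(\alpha)$. For $x\ge1$ you shift $t=x+s$ and use $(x+s)^{\alpha-1}\le 2^{\alpha-1}(x^{\alpha-1}+s^{\alpha-1})$. Your version avoids induction and gives a closed-form constant $C_\alpha=\max\{e\,\Gamma(\alpha),\,2^{\max(\alpha-1,0)}(1+\Gamma(\alpha))\}$. It also makes explicit that the $x^{-1}$ term only absorbs the bounded size of $\Gamma(\alpha,x)$ for $x<1$, while $x^{\alpha-1}$ carries the large-$x$ asymptotics. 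The paper's argument treats all $x>0$ in one pass, with the split at $x=1$ confined to an elementary inequality. It relies only on the standard recurrence for the incomplete Gamma function, at the cost of a less transparent constant.
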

    \begin{proof}
        We prove the claim by induction. For every $n\in\bZ_+$, we show that \begin{equation*}
            \forall\alpha\in(n-1,n],\quad \exists C_{\alpha} \quad \textrm{such that}\quad \Gamma(\alpha,x)\leq C_\alpha (x^{\alpha-1} + x^{-1})e^{-x}\quad\forall x>0.
        \end{equation*}
        For the base case $n=1$, we have $\alpha\in(0,1]$, it follows that $t^{\alpha-1}\leq x^{\alpha-1}$ for all $t\geq x>0$. Therefore, $\Gamma(\alpha,x)=\int_x^\infty t^{\alpha-1}e^{-t} dt\leq x^{\alpha-1} \int_x^\infty e^{-t}dt = x^{\alpha-1}e^{-x}$.
  For the induction step, assume the claim holds for $n=k-1\in\bZ_+$. For $n=k\in\bZ_+$, integration by parts gives
    \begin{align*}
\Gamma(\alpha,x)=\int_x^\infty t^{\alpha-1}e^{-t} dt&=-t^{\alpha-1}e^{-t}\Bigl|_x^\infty +(\alpha-1)\int_x^\infty t^{\alpha-2} e^{-t}dt\\
        &\leq x^{\alpha-1}e^{-x}+(\alpha-1)C_{\alpha-1}(x^{\alpha-2} + x^{-1})e^{-x}\\
        &\leq (1+2(\alpha-1)C_{\alpha-1})(x^{\alpha-1} + x^{-1})e^{-x}. 
    \end{align*}
    Here the last inequality uses $x^{\alpha-2} + x^{-1}\leq 2(x^{\alpha-1} + x^{-1})$ under the condition $\alpha > 1$, since $x^{\alpha-2} \leq x^{\alpha-1}$ when $x\geq 1$ and $x^{\alpha-2} \leq x^{-1}$ when $x<1$.
    By induction on $n$, the result holds for all $\alpha > 0$.
\end{proof}

\begin{lemma}[Boundary tail estimate]
\label{lemma:boundary-tail}
Let $\partial\Omega \subset B:=\left[0,\frac12\right]^d$
be a compact embedded $(d-1)$-dimensional Lipschitz hypersurface with
Lipschitz character bounded by $C_L$. Then there exists a constant
$C_{\rm geo}=C_{\rm geo}(d,C_L)>0$
such that 
\begin{equation}
\label{eq:surface-growth}
    \left|
        \partial\Omega\cap B_\epsilon(x)
    \right|
    \leq
    C_{\rm geo}\epsilon^{d-1},
    \qquad
    x\in\partial\Omega,
    \quad
    0<\epsilon <{\rm diam}(\partial \Omega),
\end{equation}
where $\lvert\cdot\rvert$ denotes the $(d-1)$-dimensional surface measure.
Then $\lvert\partial\Omega\rvert
    \leq
    C_{\rm geo}
    \left(\frac{\sqrt d}{2}\right)^{d-1}$.
Moreover, for every $\epsilon>0$,
\begin{equation}
\label{eq:boundary-tail-estimate}
    \sup_{x\in\partial\Omega}
    \int_{\partial\Omega\setminus B_\epsilon(x)}
    \frac{1}{\lVert x-y\rVert_2^{d+1}}
    \,\dd S_y
    \leq
    C_{\rm tail}\epsilon^{-2},
\qquad \mathrm{with} \qquad 
    C_{\rm tail}
    :=
    \frac{2^{d+1}}{3}C_{\rm geo}.
\end{equation}
\end{lemma}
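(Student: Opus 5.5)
The statement to prove is \cref{lemma:boundary-tail}. It has three parts: the local area bound \cref{eq:surface-growth}, the total area bound, and the tail estimate \cref{eq:boundary-tail-estimate}. I would establish them in that order.

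\textbf{Local area bound.} This is the main obstacle, and I would derive it from the Lipschitz character in the sense of \cite[Definition 1.11]{rataj2019curvature}. By that definition, near each point $\partial\Omega$ is, after a rigid motion, the graph of a Lipschitz function $\phi:\mathbb{R}^{d-1}\to\mathbb{R}$ with $\mathrm{Lip}(\phi)\le C_L$, over a cylinder whose radius $r_0$ is bounded below in terms of $C_L$.
\begin{itemize}
\item For $\epsilon\le r_0$: the set $\partial\Omega\cap B_\epsilon(x)$ lies in one graph chart over a $(d-1)$-ball of radius $\epsilon$. The area formula then gives
\[
|\partial\Omega\cap B_\epsilon(x)|\le \omega_{d-1}\sqrt{1+C_L^2}\,\epsilon^{d-1},
\]
where $\omega_{d-1}$ is the volume of the unit $(d-1)$-ball.
\item For $r_0<\epsilon<\mathrm{diam}(\partial\Omega)\le \sqrt d/2$: cover $\partial\Omega\cap B_\epsilon(x)$ by at most $N\lesssim (\epsilon/r_0)^d$ balls of radius $r_0$ centered on $\partial\Omega$, using a packing argument in $\mathbb{R}^d$. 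This gives a bound $C\,N r_0^{d-1}$. Since $\epsilon$ ranges over a bounded interval and $r_0$ depends only on $C_L$, this is at most $C_{\rm geo}\epsilon^{d-1}$ after enlarging the constant.
\end{itemize}
If the cited definition already includes a uniform count of charts, I would simply use that count instead of the covering argument.

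\textbf{Total area.} Fix any $x\in\partial\Omega$. Since $\partial\Omega\subset B=[0,\tfrac12]^d$, the whole surface lies in $B_\epsilon(x)$ for $\epsilon=\sqrt d/2=\mathrm{diam}(B)$. A limiting version of \cref{eq:surface-growth}, taking $\epsilon\uparrow\mathrm{diam}(\partial\Omega)$ and using monotonicity, then gives
\[
|\partial\Omega|\le C_{\rm geo}(\sqrt d/2)^{d-1}.
\]

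\textbf{Tail estimate.} I would use a dyadic shell decomposition. For $x\in\partial\Omega$, write
\[
\partial\Omega\setminus B_\epsilon(x)=\bigcup_{j\ge0}A_j,\qquad A_j:=\partial\Omega\cap\bigl(B_{2^{j+1}\epsilon}(x)\setminus B_{2^j\epsilon}(x)\bigr).
\]
\begin{itemize}
\item On $A_j$ we have $\|x-y\|_2^{-(d+1)}\le (2^j\epsilon)^{-(d+1)}$.
\item By \cref{eq:surface-growth}, $|A_j|\le C_{\rm geo}(2^{j+1}\epsilon)^{d-1}$.
\item Shells with $2^j\epsilon\ge\mathrm{diam}(\partial\Omega)$ are empty. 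For the last nonempty shell, where $2^{j+1}\epsilon$ may exceed the diameter, I would instead use the whole-surface bound, which is covered by the same constant.
\end{itemize}
Summing over $j$,
\[
\int_{\partial\Omega\setminus B_\epsilon(x)}\frac{\dd S_y}{\|x-y\|_2^{d+1}}
\le C_{\rm geo}2^{d-1}\epsilon^{-2}\sum_{j\ge0}4^{-j}
=\frac{2^{d+1}}{3}C_{\rm geo}\,\epsilon^{-2}.
\]
This matches the stated $C_{\rm tail}$ exactly. If $\epsilon\ge\mathrm{diam}(\partial\Omega)$, the integral vanishes and the estimate holds trivially.
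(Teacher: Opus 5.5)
Your proposal follows the paper's proof. The paper obtains \cref{eq:surface-growth} by citing upper Ahlfors regularity of compact Lipschitz submanifolds \cite[Proposition~1.12]{rataj2019curvature}, and your chart-plus-covering argument makes that explicit; it then applies the bound at radius $\sqrt d/2$ to control $|\partial\Omega|$, and sums over the same dyadic annuli to obtain exactly $C_{\rm tail}=\frac{2^{d+1}}{3}C_{\rm geo}$. Your two patches at the top scale are slightly off: the limit $\epsilon\uparrow\mathrm{diam}(\partial\Omega)$ misses points at distance exactly $\mathrm{diam}(\partial\Omega)$ from $x$, which can have positive measure (a Reuleaux triangle with $x$ a vertex), and $C_{\rm geo}(\sqrt d/2)^{d-1}$ does not bound the last shell by $C_{\rm geo}(2^{j+1}\epsilon)^{d-1}$ when $\mathrm{diam}(\partial\Omega)<\sqrt d/2$; both are fixed by noting that your covering argument already gives $|\partial\Omega\cap B_\epsilon(x)|\le C_{\rm geo}\epsilon^{d-1}$ for all $0<\epsilon\le\sqrt d/2$, hence for all $\epsilon>0$, which the paper uses tacitly.
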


\begin{proof}

The surface-growth estimate in \cref{eq:surface-growth} follows from the
upper Ahlfors regularity of compact Lipschitz submanifolds; see
\cite[Proposition~1.12]{rataj2019curvature}. Since
$\operatorname{diam}(B)=\frac{\sqrt d}{2}$, we have $\partial\Omega \subset B_{\sqrt d/2}(x)$
for every $x\in\partial\Omega$. Applying \cref{eq:surface-growth} with
$r=\sqrt d/2$ proves the upper bound of $\lvert\partial\Omega\rvert$.
 To prove \cref{eq:boundary-tail-estimate}, we first decompose
$\partial\Omega\setminus B_\epsilon(x)$ into dyadic annuli
\[
    A_j(x)
    :=
    \left\{
        y\in\partial\Omega:
        2^j\epsilon
        \leq
        \lVert x-y\rVert_2
        <
        2^{j+1}\epsilon
    \right\}.
\]
Using \cref{eq:surface-growth}, we obtain
\begin{align*}
    \int_{\partial\Omega\setminus B_\epsilon(x)}
    \frac{\dd S_y}{\lVert x-y\rVert_2^{d+1}}
    \leq
    \sum_{j\geq0}
    \frac{|A_j(x)|}{(2^j\epsilon)^{d+1}}\leq
    C_{\rm geo}
    \sum_{j\geq0}
    \frac{(2^{j+1}\epsilon)^{d-1}}
    {(2^j\epsilon)^{d+1}}
    \leq
    \frac{2^{d+1}}{3}
    C_{\rm geo}\epsilon^{-2}.
\end{align*}
\end{proof}

\section{Kernel Analysis}
In this section, we analyze the kernels used throughout this work (see, e.g., \cref{tab:kernel-short-range-approximation}). 
First, in \cref{appendix:kernel_properties}, we verify that these kernels satisfy the regularity assumptions required for \cref{theorem:approximation-error}.
Next, in \cref{appendix:short-range}, we derive local short-range asymptotic approximations of these kernel integrals. 
The analysis shows that the leading-order contribution takes the form
\begin{equation}
\int_{\partial\Omega\cap B_\epsilon(x)} \kappa'(x-y) f(y) \dd S_y = M_0(x) f(x) +  M_1(x) \cdot \nabla_{\calD} f(x)  + \mathcal{O}(\epsilon^{d}),
\end{equation}
where the remainder is higher order in 
$\epsilon$ (at least one order beyond the intrinsic dimension of $\partial\Omega$). The geometric moments $M_0$ and $M_1$ are polynomial functions of the outward normal $n_x$ and the curvature $\mathrm{tr}[\nabla_{\calD} n_x]$. 
Finally, in \cref{appendix:panel}, we present a panel method evaluation of the kernel integrals used in \cref{ssec:kernel_integral}.

\subsection{Kernel Properties Verification}
\label{appendix:kernel_properties}
In this subsection, we verify that the kernels in \cref{tab:kernel-short-range-approximation} satisfy the regularity assumptions in \Cref{theorem:approximation-error}. Specifically, for a bounded box
$B_2=\prod_{i=1}^d[-l_i,l_i],$
we require $\kappa\in L^1(B_2)\cap C^2(B_2\setminus\{0\})$ and the pointwise bounds
\begin{equation}\label{eq:kappa_assumption_recall}
\|\nabla^k\kappa(x)\|_2 \le \frac{C}{\|x\|_2^{k+d-1}}, \qquad k=0,1,2,\quad x\in B_2\setminus\{0\},
\end{equation}
for some constant $C>0$. All kernels are smooth away from the origin.  After rescaling the box if necessary (absorbing the scaling into the constant $C$), it suffices to verify \eqref{eq:kappa_assumption_recall} for $0<\|x\|_2\le 1$. Let $r = \lVert x \rVert_2$. For $0\leq r \leq 1$, we use the elementary inequalities $|\log r|\le r^{-1}$ and $r^{-m} \leq r^{-(m+1)}$.
\begin{itemize}
\item For the 2D Laplacian single layer kernel
$\kappa(x)=-(2\pi)^{-1}\log r$, we have $$\nabla\kappa(x)=-(2\pi)^{-1}\frac{x}{r^2}, \quad
\nabla^2\kappa(x)=-(2\pi)^{-1}\Bigl(\frac{1}{r^2}I_2-\frac{2}{r^4}xx^T\Bigr).$$ 
Hence, for $0<r\le 1$, 
$|\kappa(x)|\leq r^{-1}$, $\|\nabla\kappa(x)\|_2\lesssim r^{-1}\le r^{-2}$,$\|\nabla^2\kappa(x)\|_2\lesssim r^{-2}\le r^{-3}$.

\item For the 2D modified Laplacian double layer kernel $\kappa(x)=\frac{x}{2\pi r^2}$, we have 
\begin{align*}
    \nabla\kappa(x) = \frac{1}{2\pi}(\frac{1}{r^2}I_2-\frac{2}{r^4}xx^T), \quad
    \nabla^2\kappa(x)[h] = \frac{1}{2\pi}\bigl( \frac{8x^Th}{r^6}xx^T - \frac{2(xh^T+hx^T+x^ThI_2)}{r^4} \bigl).
\end{align*}
Hence, for $0<r\le 1$, we have $\|\kappa(x)\|_2\lesssim r^{-1}$, $\|\nabla\kappa(x)\|_2\lesssim r^{-2}$, and $\|\nabla^2\kappa(x)\|_2\lesssim r^{-3}$.
The Laplacian double layer and adjoint double layer kernels are obtained by multiplying by bounded unit normals (e.g., $n_x$ and $n_y$), so they have the same singular order and satisfy the same bounds.

\item For the 2D Stokeslet, 
$
\kappa(x)=\frac{1}{4\pi}\Bigl(-\log r\,I_2+\frac{xx^T}{r^2}\Bigr),
$
we have \begin{align*}
    \nabla\kappa(x)[h] =&\frac{1}{4\pi}\bigl( -\frac{x^Th}{r^2}I_2 + \frac{1}{r^2}(hx^T+xh^T)-\frac{2x^Th}{r^4}xx^T\bigl),\\
    \nabla^2\kappa(x)[h,k]=&\frac{1}{4\pi}\bigl(-\frac{k^Th}{r^2}I_2+\frac{2(x^Th)(x^Tk)}{r^4}I_2+\frac{1}{r^2}(hk^T+kh^T)\\&-\frac{2x^Tk}{r^4}(hx^T+xh^T)-\frac{2x^Th}{r^4}(kx^T+xk^T)-\frac{2k^Th}{r^4}xx^T+\frac{8(x^Th)(x^Tk)}{r^6}xx^T\bigl).
\end{align*}
Thus, for $0<r\leq1$, we have $\lVert\kappa(x)\rVert_2\leq r^{-1}$,  $\|\nabla\kappa(x)\|_2\lesssim r^{-1}\leq r^{-2}$ and $\|\nabla^2\kappa(x)\|_2\lesssim r^{-2} \leq r^{-3}$. 
\item For the 3D Laplacian single layer kernel, $\kappa(x)=(4\pi)^{-1}/r$, we have 
\begin{align*}
    \nabla\kappa(x)=-(4\pi)^{-1}x/r^3, \quad \nabla^2\kappa(x)=-(4\pi)^{-1}\Bigl(\frac{1}{r^3}I_3-\frac{3}{r^5}xx^T\Bigr).
\end{align*}
Hence, for $0<r\le 1$, we have 
$|\kappa(x)|\lesssim r^{-1}\le r^{-2}$, $
\|\nabla\kappa(x)\|_2\lesssim r^{-2}\le r^{-3}$, and $
\|\nabla^2\kappa(x)\|_2\lesssim r^{-3}\le r^{-4}$.

\item For the 3D modified Laplacian double layer kernel, $\kappa(x)=(4\pi)^{-1}x/r^3$, we have 
\begin{align*}
    & \nabla\kappa(x) = \frac{1}{4\pi}(\frac{1}{r^3}I_3-\frac{3}{r^5}xx^T), \\
    & \nabla^2\kappa(x)[h] = \frac{1}{4\pi}\bigl( \frac{15x^Th}{r^7}xx^T - \frac{3}{r^5}(xh^T+hx^T+x^ThI_3) \bigl).
\end{align*}
Thus, for $0<r\le 1$, $\|\kappa(x)\|_2\lesssim r^{-2}$, $ \lVert \nabla\kappa(x)\rVert_2 \lesssim r^{-3}$, and $\|\nabla^2\kappa(x)\|_2\lesssim r^{-4}$. As above, the Laplacian double layer and adjoint double layer kernels obtained by multiplying by bounded unit normals have the same singular order and satisfy the same bounds.
\item For the 3D Stokeslet,
$
\kappa(x)=\frac{1}{8\pi}\Bigl(\frac{1}{r}I_3+\frac{xx^T}{r^3}\Bigr),
$ we have 
\begin{align*}
    \nabla\kappa(x)[h] =& \frac{1}{8\pi}\Bigl( -\frac{x^Th}{r^3}I_3 + \frac{1}{r^3}(hx^T+xh^T)-\frac{3x^Th}{r^5}xx^T\Bigl),\\
    \nabla^2\kappa(x)[h,k]=&\frac{1}{8\pi}\Bigl(-\frac{k^Th}{r^3}I_3+\frac{3(x^Th)(x^Tk)}{r^5}I_3+\frac{1}{r^3}(hk^T+kh^T)\\&-\frac{3x^Tk}{r^5}(hx^T+xh^T)-\frac{3x^Th}{r^5}(kx^T+xk^T)-\frac{3k^Th}{r^5}xx^T+\frac{15(x^Th)(x^Tk)}{r^7}xx^T\Bigl).\\
\end{align*}
Therefore, for $0<r\le 1$,  $\|\kappa(x)\|_2\lesssim r^{-1}\le r^{-2}$, $\|\nabla\kappa(x)\|_2\lesssim r^{-2}\le r^{-3}$ and $\|\nabla^2\kappa(x)\|_2\lesssim r^{-3}\le r^{-4}$.
\end{itemize}

\subsection{Short-Range Asymptotic Approximation}
\label{appendix:short-range}
We now derive short-range asymptotic approximations for the kernel integrals
\begin{equation}
\int_{\partial\Omega\cap B_{\epsilon}(x)} \kappa'(x-y, n_x, n _y) f(y) \dd S_y.
\end{equation}
Fix a target point $x$. We introduce a local coordinate system by mapping $y$ to 
\begin{equation}
\label{eq:local_reference_coord}
    \tilde{y} = Q(y - x),
\end{equation}
so that $x$ is mapped to the origin and the tangent and normal directions at $x$ form the coordinate axes. 

We first consider the two-dimensional case. Let $\tau_x$ and $n_x$ denote the tangent and outward unit normal at $x$ (right-hand convention). Define the orthogonal transformation matrix by
$ Q^T = [\tau_x \,\, n_x] $.
In the local coordinate, the boundary segment $\partial\Omega\cap B_\epsilon(x)$ can be parameterized as
$\tilde{y}(s) = \bigl(s, h(s)\bigr), \, s\in[-\epsilon, \epsilon]$, where  $h(0)=0$ and $h'(0)=0$. The outward unit normal and the surface measure are given by  $\tilde{n}_{\tilde{y}} = \frac{(h'(s), -1)}{\sqrt{1 + h'(s)^2}}$ and $\dd\tilde{y} = \sqrt{1 + h'(s)^2} \dd s$. Applying Taylor expansions about $s=0$, we have 
\begin{equation}
\label{eq:expansion-2d}
    \begin{split}
        h(s) &= \frac{h''(0)}{2} s^2 +  \frac{h^{(3)}(0)}{6} s^3 + \calO(s^4), \\
\frac{1}{s^2 + h(s)^2} &= \frac{1}{s^2 (1 + \frac{h''(0)^2}{4}s^2 + \mathcal{O}(s^3))} = \frac{1}{s^2} -  \frac{h''(0)^2}{4} + \mathcal{O}(s), \\
\sqrt{1 + h'(s)^2} &= 1 + \frac{h''(0)^2}{2}s^2 + \mathcal{O}(s^3),  \\
f(y) &= \tilde{f}\bigl((s, h(s))\bigr) =   f(x) + \nabla {f}(x) \cdot \tau_x s  + \calO(s^2). 
    \end{split}
\end{equation}
The tangential (curve) gradient of the outward normal satisfies
\begin{equation}
\label{eq:nabla_D_n_y_2d}
    \begin{split}
\nabla_{\calD} n_y \bigl|_{y=x} &= Q^T\nabla_{\tilde{\calD}} \tilde{n}_{\tilde{y}}\bigl|_{\tilde{y}=\tilde{x}}Q
= Q^T\begin{bmatrix}
    h^{''}(0)&0\\
    0&0
\end{bmatrix}Q. 
    \end{split}
\end{equation}
Here, $\nabla_{\cal D}$ denotes the tangential (curve) gradient. Locally at $x$, it reduces to $\nabla_{\cal D}\bigl|_x=(I - n_x\otimes n_x) \nabla = \tau_x \partial_s$. From \cref{eq:nabla_D_n_y_2d},  we therefore identify
$h''(0) = \mathrm{tr}\bigl[\nabla_{\calD} n_y\bigl|_{y=x}\bigr]$.
Using these expansions in \cref{eq:expansion-2d}, we derive the leading-order contributions of the short-range integrals for each kernel, summarized below.

\begin{itemize}
    \item For the 2D Laplace single layer potential, we have
\begin{align*}
 \int_{\partial \Omega \cap B_{\epsilon}(x)} \frac{-1}{2\pi}\log{\lVert x-y\rVert_2} f(y) \dd S_y 
&= \frac{-1}{4\pi} \int_{-\epsilon}^{\epsilon} \log{(s^2+h(s)^2)} \tilde{f}((s, h(s)))\sqrt{1+h'(s)^2} \dd s \\
&=  -\frac{1}{\pi}f(x)(\epsilon\log\epsilon-\epsilon) + o(\epsilon^2).
\end{align*}

 \item For the 2D Laplace double layer potential, we have 
\begin{align*}
 \int_{\partial \Omega \cap B_{\epsilon}(x)} \frac{(x-y)\cdot n_y}{2\pi\lVert x - y \rVert^2_2} f(y) \dd S_y 
&=  \int_{-\epsilon}^{\epsilon} \frac{(-s, -h(s))\cdot(h'(s), -1)}{2\pi (s^2 + h(s)^2) } \tilde{f}((s, h(s))) \dd s \\
&=  -\frac{h''(0)}{2\pi}f(x)\epsilon + \calO(\epsilon^3).
\end{align*}

\item For the 2D  modified Laplace double layer potential, we have 
 
\begin{equation*}
\begin{split}
     \int_{\partial\Omega \cap B_{\epsilon}(x)} \frac{x - y}{2\pi \|x - y\|_2^2} f(y) \, \dd S_y &= Q^T \int_{-\epsilon}^{\epsilon} \frac{(-s, -h(s))}{2\pi (s^2 + h(s)^2) } \sqrt{1 + h'(s)^2} \tilde{f}((s, h(s))) \dd s \\
 &=  -\frac{\epsilon}{2\pi}Q^T\begin{bmatrix}
2\nabla f(x) \cdot \tau_x \\
h''(0)f(x)
\end{bmatrix} + \mathcal{O}(\epsilon^2)\\
&=  -\frac{\epsilon}{2\pi} 
 \Bigl(2\nabla_{\cal D} f(x) + 
 h''(0)f(x)  n_x\Bigr)
+ \mathcal{O}(\epsilon^3).
\end{split}
\end{equation*}

\item For the 2D  adjoint Laplace double layer potential, applying the same local approximation as for the modified Laplace double layer potential yields

\begin{align*}
\int_{\partial \Omega \cap B_{\epsilon}(x)} \frac{(y-x)\cdot n_x}{2\pi\lVert x - y \rVert^2_2} f(y) \dd S_y 
&=  -\frac{h''(0)}{2\pi}f(x)\epsilon +  \calO(\epsilon^3).
\end{align*}

\item For the 2D  Stokeslet, we have

\begin{align*}
   & \int_{\partial\Omega\cap B_\epsilon(x)}\frac{1}{4\pi}\Bigl(-\log{\lVert x-y\rVert_2}I_2+\frac{(x-y)(x-y)^T}{\lVert x-y\rVert_2^2}\Bigl)f(y)\dd s \\
   =& \frac{-1}{2\pi}(\epsilon\log\epsilon-\epsilon)f(x) + o(\epsilon^2) + \int_{-\epsilon}^\epsilon Q^T\frac{1}{4\pi}\Bigl(\frac{1}{s^2+h(s)^2}\begin{bmatrix}
       s^2 & sh(s) \\ sh(s) & h(s)^2
   \end{bmatrix}\Bigl)Q\tilde{f}((s, h(s)))\sqrt{1+h'(s)^2}\dd s \\
   =&\frac{-1}{2\pi}(\epsilon\log\epsilon-\epsilon)f(x) + \frac{1}{2\pi}Q^T\begin{bmatrix}
       1 & 0 \\ 0 & 0
   \end{bmatrix}Qf(x)\epsilon+o(\epsilon^2)\\
   =& \frac{-1}{2\pi}(\epsilon\log\epsilon-\epsilon)f(x) + \frac{1}{2\pi}\bigl(I_2 - n_x  n_x^T\bigr)f(x)\epsilon+o(\epsilon^2).
\end{align*}
\end{itemize}

We then consider the three-dimensional case. 
Let $\tau_{x,1}$, $\tau_{x,2}$, and $n_x$ denote two orthonormal tangent directions and the outward unit normal at $x$, respectively. Define the orthogonal transformation matrix $Q$ in \cref{eq:local_reference_coord} by 
$Q^T = [\tau_{x,1}\, \tau_{x,2}\, n_x].$
In the local coordinate, the local surface patch $\partial\Omega\cap B_{\epsilon}(x)$
can be parameterized as  
$\tilde{y}(s) = (s, h(s)), s = (s_1\,, s_2) \in B_{\epsilon}(0)$ where $h((0,0))=0$ and $\nabla h((0,0))=0$. The outward unit normal is $\tilde{n}_{\tilde{y}} = \frac{(\partial_1h(s), \partial_2h(s), -1)}{\sqrt{1 + \partial_1h(s)^2 + \partial_2h(s)^2}}$ and the surface measure satisfies $\dd\tilde{y} = \sqrt{1 + \partial_1h(s)^2 + \partial_2h(s)^2} \dd s$. Applying Taylor expansions about $s = (0,0)$, we have 
\begin{equation}
\label{eq:expansion-3D}
    \begin{split}
      h(s) &= \frac{1}{2} s^T \nabla^2 h(0) s + \calO(s^3), \\
(s^T s + h(s)^2)^{-3/2} &= \frac{1}{(s^Ts)^{3/2}} \Bigl(1 - \frac{3}{8}\frac{(s^T\nabla^2h(0) s)^2}{s^T s}\Bigr) +   \mathcal{O}(1), \\
(s^Ts+h(s)^2)^{-1/2}&=\frac{1}{(s^Ts)^{1/2}}\Bigl(1-\frac{1}{8}\frac{(s^T\nabla^2h(0) s)^2}{s^T s}\Bigl)+\mathcal{O}(\lVert s\rVert_2^2),\\
\sqrt{1 + \partial_1h(s)^2 + \partial_2h(s)^2} &= 1 + \frac{1}{2}s^T \nabla^2 h(0) \nabla^2 h(0)s + \mathcal{O}(\lVert s\rVert_2^3),  \\
f(y) &= \tilde{f}\bigl((s, h(s))\bigr) =   f(x) + \sum_{i=1}^2\nabla {f}(x) \cdot \tau_{x,i} s_i  + \calO(\lVert s\rVert_2^2).   
    \end{split}
\end{equation}
The tangential (surface) gradient of the outward normal satisfies
\begin{equation}
\label{eq:nabla_D_n_y}
    \begin{split}
\nabla_{\calD} n_y \bigl|_{y=x} &= Q^T\nabla_{\tilde{\calD}} \tilde{n}_{\tilde{y}}\bigl|_{\tilde{y}=\tilde{x}}Q
= Q^T\begin{bmatrix}
    \partial_{11}h(0)&\partial_{12}h(0)&0\\
    \partial_{21}h(0)&\partial_{22}h(0)&0\\
    0&0&0
\end{bmatrix}Q. 
    \end{split}
\end{equation}
Here $\nabla_{\cal D}$ denotes the tangential (surface) gradient. Locally, $\nabla_{\cal D}\bigl|_x =(I - n_x\otimes n_x) \nabla = \sum_{i=1}^2\tau_{x,i} \partial_{s_i}$. Consequently, 
$\Delta h(0) = \mathrm{tr}\bigl[\nabla_{\calD} n_y\bigl|_{y=x}\bigr]$.
Using these expansions in \cref{eq:expansion-3D}, we now derive the leading-order contributions of the short-range integrals for each kernel.

\begin{itemize}
    \item For the 3D Laplace single layer potential, we have 
\begin{align*}
 \int_{\partial \Omega \cap B_{\epsilon}(x)} \frac{1}{4\pi \lVert x-y\rVert_2} f(y) \dd S_y 
&= \frac{1}{4\pi}\int_{B_{\epsilon}(0)} (s^Ts+h(s)^2)^{-1/2}\sqrt{1 + \partial_1 h(s)^2 + \partial_2 h(s)^2} \tilde{f}((s, h(s))) \dd s \\
&= \frac{1}{4\pi}\int_{B_\epsilon(0)}(\frac{1}{(s^Ts)^{1/2}}+\mathcal{O}(\lVert s\rVert_2))(f(x)+\sum_{i=1,2}\nabla f(x)\cdot\tau_{x,i}s_i+\mathcal{O}(\lVert s\rVert_2^2))\dd s \\
&=\frac{\epsilon}{2}f(x) + \mathcal{O}(\epsilon^3).
\end{align*}

 \item For the 3D Laplace double layer potential, we have 
\begin{align*}
 \int_{\partial \Omega \cap B_{\epsilon}(x)} \frac{(x-y)\cdot n_y}{4\pi\lVert x - y \rVert^3_2} f(y) \dd S_y 
&=  -\frac{1}{4\pi}\int_{B_\epsilon(0)}\frac{s^T\nabla h(s)-h(s)}{(s^Ts+h(s)^2)^{3/2}}(f(x)+\sum_{i=1}^2\nabla f(x)\cdot\tau_{x,i}s_i+\mathcal{O}(\lVert s\rVert_2^2))\dd s\\
&=-\frac{1}{4\pi}\int_{B_\epsilon(0)}\frac{1}{(s^Ts)^{3/2}}(1-\frac{3}{8}\frac{(s^T\nabla^2h(0)s)^2}{s^Ts}) \frac{1}{2}s^T\nabla^2h(0)s f(x)\dd s +\mathcal{O}(\epsilon^3) \\
& = -\frac{1}{4\pi}\int_{B_\epsilon(0)}\frac{s^T\nabla^2h(0)s}{2(s^Ts)^{3/2}} f(x) \dd s  +\mathcal{O}(\epsilon^3)\\
&=-\frac{\epsilon}{8}\Delta h(0)f(x)+\mathcal{O}(\epsilon^3).
\end{align*}

\item For the 3D modified Laplace double layer potential, we have 
\begin{equation*}
\begin{split}
     \int_{\partial\Omega \cap B_{\epsilon}(x)} \frac{x - y}{4\pi \|x - y\|_2^3} f(y) \, \dd S_y &= -Q^T \int_{B_\epsilon(0)} \frac{(s,h(s))}{4\pi (s^Ts + h(s)^2)^{3/2} } \sqrt{1 + \partial_1 h(s)^2 + \partial_2 h(s)^2} \tilde{f}((s, h(s))) \dd s \\
     &= -Q^T \int_{B_\epsilon(0)} \frac{(s,\frac{1}{2}s^T\nabla^2 h(0) s)}{4\pi (s^Ts)^{3/2}}\bigl(f(x) + \sum_{i=1}^2\nabla {f}(x) \cdot \tau_{x,i} s_i\bigr) \dd s  + \mathcal{O}(\epsilon^3)\\
 &=  -\frac{\epsilon}{8}Q^T\begin{bmatrix}
2\nabla {f}(x) \cdot \tau_{x,1} \\
2\nabla {f}(x) \cdot \tau_{x,2}\\
\Delta h(0)f(x)
\end{bmatrix} + \mathcal{O}(\epsilon^3)\\
&=  -\frac{\epsilon}{8} 
 \Bigl(2\nabla_{\calD} f(x)  + 
 \Delta h(0)f(x)  n_x\Bigr)
+ \mathcal{O}(\epsilon^3).
\end{split}
\end{equation*}
\item For the 3D adjoint Laplace double layer potential, applying the same local approximation as for the modified Laplace double layer potential yields
\begin{align*}
\int_{\partial \Omega \cap B_{\epsilon}(x)} \frac{(y-x)\cdot n_x}{4\pi\lVert x - y \rVert^3_2} f(y) \dd S_y  = -\frac{\epsilon}{8}\Delta h(0) f(x)+\mathcal{O}(\epsilon^3).
\end{align*}

\item For the 3D Stokeslet, we have 
\begin{align*}
   & \int_{\partial\Omega\cap B_\epsilon(x)}\frac{1}{8\pi}\Bigl(\frac{1}{\lVert x-y\rVert_2}I_3 + \frac{(x-y)(x-y)^T}{\lVert x-y\rVert_2^3}\Bigl)f(y)\dd s(y) 
   \\
   =&
   \frac{\epsilon}{4}f(x) + \frac{1}{8\pi}\int_{B_\epsilon(0)}Q^T\Bigl((s^Ts+h^2(s))^{-3/2}\begin{bmatrix}
       ss^T & h(s)s \\ h(s)s^T & h(s)^2
   \end{bmatrix}\Bigl)Q\bigl(f(x)+\sum_{i=1}^2 \nabla f(x)\cdot\tau_{x,i} s_i\bigr)\dd s +\mathcal{O}(\epsilon^3)
   \\
   =&
   \frac{\epsilon}{4}f(x) + \frac{1}{8\pi}\int_{B_\epsilon(0)}Q^T(s^Ts)^{-3/2}\Bigl(\begin{bmatrix}
       ss^T & 0 \\ 0 & 0
   \end{bmatrix}Q\bigl(f(x)+\sum_{i=1}^2(\nabla f(x)\cdot\tau_{x,i})s_i\bigr)\Bigl)\dd s +\mathcal{O}(\epsilon^3)
   \\
   =&
   \frac{\epsilon}{4}f(x) + \frac{1}{8\pi}\int_{B_\epsilon(0)}Q^T(s^Ts)^{-3/2}\Bigl(\begin{bmatrix}
       ss^T & 0 \\ 0 & 0
   \end{bmatrix}Q f(x) \Bigl)\dd s +\mathcal{O}(\epsilon^3)
   \\
   =&
   \frac{\epsilon}{4}f(x) + \frac{1}{8\pi} Q^T \begin{bmatrix}
       \epsilon \pi I_2 & 0 \\ 0 & 0
   \end{bmatrix}Q f(x)   +\mathcal{O}(\epsilon^3)
   \\
   =&
   \frac{\epsilon}{4}f(x) + \frac{\epsilon}{8} (I_3 - n_x n_x^T) f(x)   +\mathcal{O}(\epsilon^3).
\end{align*}

\end{itemize}

\subsection{Numerical Evaluation of Kernel Integrals}
\label{appendix:panel}
We conclude by describing the numerical evaluation of the 2D kernel integrals
$$ u(x) = \int_{\partial \Omega} \kappa(x - y; n_x, n_y) f(y) \dd S_y, $$
introduced in \cref{ssec:kernel_integral}, using a panel method.
We discretize the curve $\partial \Omega$ into $N$ panels, $\partial \Omega = \cup_{i=1}^{N} \Gamma_i$, ordered counterclockwise. On each panel, both the density $f$ and the solution $u$ are approximated by piecewise constant functions. 
For a given panel $\Gamma$ of length $l$ and orientation angle $\theta$, let $\tau = [\cos\theta,\, \sin\theta]^T$ and $n= [\sin\theta,\,-\cos\theta]^T$ denote the unit tangent and outward unit normal of the panel, respectively. We introduce a local reference coordinate system via the transformation 
\begin{equation}
\label{eq:local_reference_coord_panel}
    \tilde{x} = Q(x - b),\qquad Q^T = \begin{bmatrix}
    \cos\theta & -\sin\theta\\
    \sin\theta & \cos\theta
\end{bmatrix} = \begin{bmatrix}\tau & n\end{bmatrix},
\end{equation}
where $b$ is the starting point of the panel, which is mapped to the origin, and the panel is aligned with the 
$\tilde{x}_1$-axis. 
Writing $\tilde{x} = (\tilde{x}_1, \tilde{x}_2)$, the panel integrals are evaluated in this local reference frame and then transformed back to the global coordinates.
Closed-form expressions for each kernel integral on $\Gamma$ are listed below:

\begin{itemize}
\item For the 2D Laplacian single layer potential, if  $\tilde{x}_2 \neq 0$, then 
\begin{align*}
\int_{\Gamma} \kappa(x-y) \dd S_y &= 
      -\frac{1}{2\pi}\Bigl(
      (l-\tilde{x}_1)\ln\sqrt{(l-\tilde{x}_1)^2 + {\tilde{x}_2}^{2}} + \tilde{x}_1\ln\sqrt{{\tilde{x}_1}^2 + {\tilde{x}_2}^{2}} -l \\ 
      &+\tilde{x}_2\arctan\bigl(\frac{l-\tilde{x}_1}{\tilde{x}_2}\bigr) + \tilde{x}_2\arctan\bigl(\frac{\tilde{x}_1}{\tilde{x}_2}\bigr) 
      \Bigr).
\end{align*}
If $\tilde{x}_2 = 0$, then
\begin{align*}
\int_{\Gamma} \kappa(x - y) \dd S_y 
&= -\frac{1}{2\pi}\Bigl(
      (l-\tilde{x}_1)\ln\sqrt{(l-\tilde{x}_1)^2} + \tilde{x}_1\ln\sqrt{{\tilde{x}_1}^2} - l 
      \Bigr).
\end{align*}

\item For the 2D Laplacian double layer potential, since the panels are ordered counterclockwise, in the local coordinates \cref{eq:local_reference_coord_panel}, the outward unit normal is $\tilde{n}_{\tilde{y}} = (0,-1)$.
If $\tilde{x}_2 \neq 0$, then
\begin{align*}
\label{eq:double_layer_potential}
\int_{\Gamma} \kappa(x - y, n_y) \dd S_y &= 
      -\frac{1}{2\pi}\Bigl[ \arctan(\frac{\tilde{x}_1}{\tilde{x}_2}) +  \arctan(\frac{l-\tilde{x}_1}{\tilde{x}_2})\Bigr]. 
\end{align*}
If $\tilde{x}_2 = 0$, the integral vanishes:
\begin{align*}
\int_{\Gamma} \kappa(x - y, n_y) \dd S_y =  0. 
\end{align*}

\item For the 2D modified Laplacian double layer potential, if $\tilde{x}_2 \neq 0$, then 
\begin{align*}
\int_{\Gamma} \kappa(x - y) \dd S_y &= 
      \frac{1}{2\pi}\Bigl[
      \ln\sqrt{\frac{\tilde{x}_1^2 + {\tilde{x}_2}^{2}}{(l-\tilde{x}_1)^2 + \tilde{x}_2^2}}\,,\, \arctan(\frac{\tilde{x}_1}{\tilde{x}_2}) +  \arctan(\frac{l-\tilde{x}_1}{\tilde{x}_2})\Bigr] Q.
\end{align*}
If $\tilde{x}_2 = 0$, the integral is interpreted in the Cauchy principal value sense and reduces to
\begin{align*}
\int_{\Gamma} \kappa(x - y) \dd S_y &= 
      \frac{1}{2\pi}\Bigl[
      \ln\sqrt{\frac{\tilde{x}_1^2}{(l-\tilde{x}_1)^2}}\,,\, 0\Bigr] Q.
\end{align*}

\item For the 2D adjoint Laplacian double layer potential, if $\tilde{x}_2 \neq 0$, then 
\begin{align*}
\int_{\Gamma} \kappa(x - y, n_x) \dd S_y &= 
      -\frac{1}{2\pi}\Bigl[
      \ln\sqrt{\frac{\tilde{x}_1^2 + {\tilde{x}_2}^{2}}{(l-\tilde{x}_1)^2 + \tilde{x}_2^2}}\,,\, \arctan(\frac{\tilde{x}_1}{\tilde{x}_2}) +  \arctan(\frac{l-\tilde{x}_1}{\tilde{x}_2})\Bigr] Q n_x.
\end{align*}
If $\tilde{x}_2 = 0$, then 
\begin{align*}
\int_{\Gamma} \kappa(x - y, n_x) \dd S_y &= 
      \frac{1}{2\pi}\Bigl[
      \ln\sqrt{\frac{\tilde{x}_1^2}{(l-\tilde{x}_1)^2}}\,,\, 0\Bigr] Q n_x.
\end{align*}

\item For the 2D Stokeslet, if
$\tilde{x}_2 \neq 0$, then
\begin{align*}
\int_{\Gamma} \frac{(x-y)(x-y)^T}{\lVert x - y \rVert_2^2} \dd S_y &= Q^T\begin{bmatrix}
    l - \tilde{x}_2\Bigl(\arctan(\frac{\tilde{x}_1}{\tilde{x}_2}) + \arctan(\frac{l-\tilde{x}_1}{\tilde{x}_2})\Bigr) & \tilde{x}_2
      \ln\sqrt{\frac{\tilde{x}_1^2 + {\tilde{x}_2}^{2}}{(l-\tilde{x}_1)^2 + \tilde{x}_2^2}}\\
      * & \tilde{x}_2\Bigl(\arctan(\frac{\tilde{x}_1}{\tilde{x}_2}) + \arctan(\frac{l-\tilde{x}_1}{\tilde{x}_2})\Bigr)
\end{bmatrix} Q.
\end{align*}
If $\tilde{x}_2 = 0$, then 
\begin{align*}
\int_{\Gamma} \frac{(x-y)(x-y)^T}{\lVert x - y \rVert_2^2} \dd S_y 
&= Q^T\begin{bmatrix}
    l  & 0\\
     0 & 0
      \end{bmatrix} Q.
\end{align*}
\end{itemize}

\section{Floating-Point Cost Analysis}
\label{sec:flops}
In this section, we estimate the number of real floating-point operations required for one
forward evaluation of the M-PCNO. One real addition or multiplication is
counted as one floating point operation. For complex arithmetic, we
count a complex multiplication as six real flops and a complex addition as
two real flops. The evaluation of a scalar pointwise activation
$\sigma$ is assigned a cost of $c_\sigma$ flops.

Let
\[
    X=\{x^{(i)}\}_{i=1}^{N}\subset\calD
\]
denote the point-cloud discretization of $\calD$, and let
$\nu(x^{(i)})$ denote the number of neighbors of $x^{(i)}$. We define
\begin{equation}
    N_f
    :=
    \frac{1}{2}\sum_{i=1}^{N}\nu(x^{(i)})
\end{equation}
as the number of undirected edges in the neighborhood graph. Let $d_g$ be
the latent feature width, and let
\begin{equation}
    K:=(2p+1)^d
\end{equation}
denote the number of retained Fourier modes.

We assume that geometry-dependent quantities, including the quadrature weights, neighbor sets, normal vectors, and normal derivatives, are precomputed and are therefore not included in the inference count.

\paragraph{Lifting and projection}

Assume that the lifting map $\calP$ is a
pointwise affine map. Then its cost is
\begin{equation}
\label{eq:lifting}
    C_{\rm lift} = 2N (d_f+2d) d_g.
\end{equation}
Assume further that the projection map $\calQ$ is a two-layer pointwise multilayer perceptron $\mathbb R^{d_g}\to\mathbb R^{d_g}\to\mathbb R^{d_u}$ with one pointwise activation in the hidden layer. Its cost is
\begin{equation}
\label{eq:projection}
C_{\rm proj} = 2N d_g^2 + 2N d_g d_u + c_{\sigma} N d_g.
\end{equation}

\paragraph{Long-range operator}
To reduce memory usage, the Fourier basis functions $e^{2\pi i k \cdot \frac{x}{2l}}$ are evaluated on the fly rather than stored for
all modes and points.   
For each retained mode \(k\), evaluating the basis on all points costs $4 d N$ flops. Forming the weighted modal coefficient costs \(4d_g N\) flops, applying the learned complex matrix \(W_v^k\) costs \(8d_g^2-2d_g\) flops, and reconstructing the output at all points costs \(8d_g N\) flops. 
The source- and target-normal factors in
\cref{eq:K_long_surf} introduce additional pointwise operations. Forming
$g(y)\otimes n_y$ and
$(\bK_{\rm long}^{(1)}g)(x)\otimes n_x$ costs
$2dd_gN$ flops. Each of the matrices $W_1$ and $W_2$ maps
$d_g(d+1)$ input features to $d_g$ output features and therefore costs
$\bigl(2d_g^2(d+1)-d_g\bigr)N$ flops. 
Consequently, the total cost of the long-range operator is
\begin{equation}
\label{eq:cost-unstructured-integral}
K (12d_g + 4d) N + K(8d_g^2 - 2d_g) + 2\bigl( (2d+2) d_g + d - 1\bigr)d_g N.
\end{equation}

\paragraph{Short-range operator}
The dominant operation in the short-range operator is the local
least-squares gradient reconstruction. For each directed edge, forming the
feature difference costs $d_g$ flops, while applying the local aggregation
weights costs $2dd_g$ flops. If the undirected neighborhood graph contains $N_f$
edges, and hence $2N_f$ directed edges, the gradient reconstruction costs
$2(2d+1)d_gN_f$.
The auxiliary short-range term $\bK_{\rm short}^{(1)}$ contains a pointwise affine map, a gradient transformation, one \texttt{SoftSign} transformation, a corresponding channel-mixing matrix, and a vector addition. Counting three flops per scalar \texttt{SoftSign} evaluation, these operations cost
$\bigl((2d+4)d_g^2+2d_g\bigr)N$.
The geometry-dependent branch contains the transformation of the normal and normal gradient features,  a second \texttt{SoftSign} transformation, a transformation of $g(x)$,
a componentwise product, the application of $W_{g,4}$, and the addition to
$\bK_{\rm short}^{(1)}g$. Its cost is $\bigl(4d_g^2 + 2(d^2+d)d_g + 2d_g\bigr)N$.
Consequently, the total cost of the short-range operator is
\begin{equation}
2(2d+1)d_gN_f + 2\bigl((d+4)d_g + d^2+d + 2\bigr)d_g N.
\end{equation}

\paragraph{Multiscale point cloud neural layer}
In addition to evaluating the long- and short-range operators, each multiscale layer
adds their outputs, applies the pointwise activation, and adds the
residual connection. These operations
cost $(2+c_\sigma)d_gN$.
Therefore, the cost of one multiscale point-cloud neural layer is
\begin{equation}
\label{eq:cost-mpcno-layer}
\begin{split}
    C_{\rm layer} =    
    & K (12d_g + 4d) N + 2(2d+1)d_gN_f + K(8d_g^2 - 2d_g) \\
    &+ \bigl( (6d+12)d_g + 2d^2+4d + 2 + c_\sigma\bigr)d_g N.
\end{split}
\end{equation}

\paragraph{Total inference cost.}
We assume that the neighborhood size is uniformly bounded and is comparable to that of a structured grid, so that  \(N_f \approx d N\). 
We further assume that $c_\sigma=\mathcal{O}(1)$ and 
$d_f,d_u, d \ll d_g$. 
Combining \cref{eq:lifting,eq:projection,eq:cost-mpcno-layer} and retaining the dominant terms in $N$, $K$, $d_g$, and $L$, we obtain
\begin{equation}
\begin{split}
    C_{\rm total} &= C_{\rm lift} + C_{\rm proj} + L C_{\rm layer} \\
                 &=  \calO\bigl(12 K L d_g N + 8 L K d_g^2 + (2 + (6d+12)L) d_g^2 N\bigr).
\end{split}
\end{equation}
Since $K=(2p+1)^d$, the inference cost is linear in $N$ for fixed spatial
dimension $d$, Fourier truncation parameter $p$, latent width $d_g$, and
network depth $L$.

\section{Experimental Detail}
\label{app:experimental_settings}
In this section, we give details about all numerical experiments.

\subsection{Common Training Setting}

All experiments involve variable computational domains discretized with different numbers of points. To enable consistent and parallelizable processing, all samples (inputs and outputs) are zero-padded to a fixed maximum length. Unless otherwise stated, the M-PCNO architecture consists of 5 multiscale point cloud neural layers~\eqref{eq:mpcno-layer}, each with $d_g = 64$ channels. Training is performed using the Adam optimizer~\cite{kingma2015adam} with $(\beta_1,\beta_2)=(0.9,0.999)$, a base learning rate of $5\times 10^{-4}$, and weight decay $10^{-4}$. The learning rate is scheduled by OneCycleLR~\cite{smith2019super} with 
\texttt{div\_factor} $=2$, \texttt{final\_div\_factor} $=100$, and \texttt{pct\_start} $=0.2$.
The training is with a batch size of $8$ over 500 epochs. All experiments are run on NVIDIA A100 80G GPUs.


\subsection{Kernel Integral Problem}
\label{app:kernel_integral_details}
For the two-dimensional curve experiments, each geometry is represented by a closed polygonal curve. The curves are generated in polar form. Specifically, for $\theta\in[0,2\pi)$, we sample a random radius
\begin{equation}
    r(\theta)
    =
    \tanh\left(
        r_0
        +
        \sum_{j=1}^{k}
        \left(
            a_j\sin(j\theta)+b_j\cos(j\theta)
        \right)
    \right)
    +1.5,
\end{equation}
where $r_0$, $a_j$, and $b_j$ are randomly sampled coefficients, with higher-frequency coefficients scaled by $1/\sqrt{j}$. The corresponding curve is given by
\begin{equation}
    \bm{x}(\theta)=r(\theta)(\cos\theta,\sin\theta).
\end{equation}
The generated curves are further rescaled so that they fit into a fixed computational box. For the geometric generalization test, we also generate two-component geometries consisting of two independently sampled closed curves.

All curves lie within a bounding box with side lengths $l_1=l_2=5$, which are also used as the characteristic length scales of the Fourier basis.
Input functions are sampled from a Gaussian random field, and the reference
integrals are evaluated by the panel method in \cref{appendix:panel}. For the
integral-operator and exterior Laplace experiments, we use $8000$ training
samples and $1000$ single-curve test samples. An additional test set with
$1000$ two-component geometries is used to evaluate geometric and
topological generalization.

The complete relative $L^2$ test errors for the linear model and the 5-layer M-PCNO are reported in \cref{tab:single-layer,tab:M-PCNO-pn-vs-error},
respectively. These values are visualized in
\cref{fig:kernel_integral_error_plot} of the main paper. Within each table
entry, the first value is the single-curve error and the second is the
two-curve error.

\begin{table}[htbp]
    \begin{center}
    \resizebox{0.95\textwidth}{!}{%
        \begin{tabular}{c|cccc}
			\Xhline{1.2pt}
			\diagbox{Kernel}{$p$}  & $8 $ & $16$ & $32$ & $64$\\ 
            \Xhline{1.2pt}
			\makecell{Laplacian single \\layer potential}   & 0.4251\quad 0.7979   & 0.1117\quad 0.2211   & 0.0597\quad 0.081   & 0.0593\quad 0.0376   \\ \hline
			\makecell{Laplacian double \\layer potential}   & 10.1761\quad 22.6872   & 4.5952\quad 13.3501   & 1.6056\quad 6.3796   & 0.5962\quad 3.2029   \\ \hline
			\makecell{Modified Laplacian \\double layer potential}   & 10.6008\quad 18.897   & 4.8453\quad 11.8611   & 1.7733\quad 5.9475   & 0.6173\quad 2.5156   \\ \hline
			\makecell{Adjoint Laplacian \\double layer potential}   & 5.4653\quad 10.792   & 2.2898\quad 6.0529   & 0.7547\quad 2.8915   & 0.2933\quad 1.2417   \\ \hline
			\makecell{Stokeslet}   & 1.3103\quad 2.3547   & 0.3562\quad 0.7624   & 0.1418\quad 0.2538   & 0.1085\quad 0.1161   \\ 
            \Xhline{1.2pt}
		\end{tabular}
        }
    \end{center}
    \caption{Kernel integral learning with the single-layer linear model. Relative $L^2$ test errors ($\times 10^{-2}$) for different truncated mode numbers $p$, trained on $n=8000$ single-curve samples. Each entry reports single-curve and two-curve test errors.}
    \label{tab:single-layer}
\end{table}

\begin{table}[htbp]
    \begin{center}
    \resizebox{0.95\textwidth}{!}{%
        \begin{tabular}{c|cccc}
			\Xhline{1.2pt}
			\diagbox{Kernel}{$p$}  & $8 $ & $16$ & $32$ & $64$\\ \Xhline{1.2pt}
			\makecell{Laplacian single \\layer potential}   & 0.291\quad 0.7256   & 0.0934\quad 0.2096   & 0.0757\quad 0.0828   & 0.1434\quad 0.1866   \\ \hline
			\makecell{Laplacian double \\layer potential}   & 1.3531\quad 6.1345   & 0.6054\quad 3.281   & 0.2735\quad 1.8418   & 0.2287\quad 1.6217   \\ \hline
			\makecell{Modified Laplacian \\double layer potential}   & 0.9628\quad 4.0536   & 0.5433\quad 2.8021   & 0.2857\quad 1.4363   & 0.1993\quad 0.8351   \\ \hline
			\makecell{Adjoint Laplacian \\double layer potential}   & 1.0068\quad 4.3489   & 0.5354\quad 2.7218   & 0.38\quad 1.8117   & 0.1632\quad 0.7145   \\ \hline
			\makecell{Stokeslet}   & 0.5808\quad 1.6941   & 0.2254\quad 0.5793   & 0.1388\quad 0.2525   & 0.1477\quad 0.2376   \\ \Xhline{1.2pt}
		\end{tabular}
        }
    \end{center}
    \begin{center}
    \resizebox{0.95\textwidth}{!}{%
        \begin{tabular}{c|cccc}
			\Xhline{1.2pt}
			\diagbox{Kernel}{$n$}  & $1000$ & $2000$ & $4000$ & $8000$\\ \Xhline{1.2pt}
			\makecell{Laplacian single \\layer potential}   & 0.89\quad 1.1019   & 0.3475\quad 0.4173   & 0.1719\quad 0.268   & 0.0757\quad 0.0828   \\ \hline
			\makecell{Laplacian double \\layer potential}   & 0.9926\quad 3.8675   & 0.5247\quad 2.7039   & 0.3998\quad 2.4095   & 0.2735\quad 1.8418   \\ \hline
			\makecell{Modified Laplacian \\double layer potential}   & 0.5943\quad 2.2242   & 0.4147\quad 1.7736   & 0.3445\quad 1.6432   & 0.2857\quad 1.4363   \\ \hline
			\makecell{Adjoint Laplacian \\double layer potential}   & 0.575\quad 2.1708   & 0.4826\quad 1.9921   & 0.4178\quad 1.8831   & 0.38\quad 1.8117   \\ \hline
			\makecell{Stokeslet}   & 0.8023\quad 1.1355   & 0.3857\quad 0.5847   & 0.2187\quad 0.377   & 0.1388\quad 0.2525   \\ \Xhline{1.2pt}
		\end{tabular}
        }
    \end{center}
    \caption{Kernel integral learning with the 5-layer M-PCNO. Relative $L^2$ test errors ($\times 10^{-2}$) for different truncated mode numbers $p$, trained on $n=8000$ single-curve samples (top), and for different single-curve training dataset sizes $n$ with $p=32$ fixed (bottom). Each entry reports single-curve and two-curve test errors.}\label{tab:M-PCNO-pn-vs-error}
\end{table}

\subsection{Exterior Neumann Problem}
\label{app:exterior_neumann_details}

The curve distributions, input sampling, training/test sizes, and common training settings are the same as in
\cref{app:kernel_integral_details}.
The complete relative $L^2$ test errors for the 5-layer M-PCNO are reported in \cref{tab:exterior_neumann},
respectively. These values are visualized in
\cref{fig:exterior_neumann} of the main paper.

\begin{table}[htbp]
\begin{center}
\resizebox{0.85\textwidth}{!}{%
		\begin{tabular}{c|cccc}
			\Xhline{1.1pt}
			\diagbox{$p$}{$n$}  & $1000$ & $2000$ & $4000$ & $8000$\\ \Xhline{1.1pt}
            8 & 3.4427\quad 13.5705   & 2.2997\quad 13.5514   & 1.6629\quad 13.0614   & 1.2328\quad 13.024   \\ \hline
			16 & 2.7867\quad 10.0941   & 1.7473\quad 8.8995   & 1.2366\quad 8.6305   & 0.9289\quad 8.1034   \\ \hline
			32 & 3.5967\quad 9.3321   & 2.1442\quad 7.3943   & 1.2108\quad 5.764   & 0.9217\quad 4.9039   \\ \hline
			64 & 4.5981\quad 10.7413   & 2.9173\quad 9.52   & 1.7887\quad 7.917   & 1.0939\quad 6.1703   \\ \Xhline{1.1pt}
		\end{tabular}
        }
	\end{center}
    \caption{Neumann-to-Dirichlet map for the exterior Laplacian learned with a 5-layer M-PCNO. Relative $L^2$ test errors ($\times 10^{-2}$) for different single-curve training dataset sizes $n$ and truncated mode numbers $p$. Each entry reports single-curve and two-curve test errors.}
    \label{tab:exterior_neumann}
\end{table}

\subsection{Potential Flow Problem}
For the 3D potential-flow experiment, we generate a mixed geometry dataset consisting of car and aircraft surfaces. 
The dataset consists of two categories: cars and aircraft. The car subset (about 5,000 samples), including fastback, notchback, and estateback designs, is derived from DrivAerNet++\cite{elrefaie2025drivaernetlargescalemultimodalcar} (see \cref{fig:potential_flow}). The aircraft subset (about 6,000 samples) is generated using NASA's Open Vehicle Sketch Pad (OpenVSP). 
Baseline aircraft designs, including a fighter jet, a turboprop aircraft, and a commercial airliner,
are obtained from the OpenVSP Airshow (https://airshow.openvsp.org/) and modified to form reference templates. Stochastic perturbations are then applied to the shape parameters of these templates, yielding a diverse range of aerodynamic configurations. All surface meshes are converted to triangular meshes and decimated to about $40,000$ elements, then isotropically scaled to fit within the bounding box $[-1, 1]^3$. The characteristic length scales of the Fourier basis are $(l_1, l_2, l_3) = (2.05, 2.05, 0.75)$.

\subsection{Turbulent Flow Problem}

The \textit{ShapeNet-Car} benchmark \cite{umetani2018learning} contains surface-pressure fields from steady external-flow simulations around $611$ vehicle geometries. Each vehicle is represented by a triangular surface mesh with approximately $3.7\times10^3$ points. Following the benchmark protocol used by GINO, we use $500$ samples for training and $111$ for testing. 
For the reported M-PCNO result, we use $p=16$, characteristic length scales $(l_1, l_2, l_3) = (2, 2, 6)$ for the Fourier basis, and 7 layers with hidden width $d_g = 64$. This configuration gives a test relative $L^2$ error of $0.0636$.

To assess sensitivity to random initialization, we repeated the training with five different random seeds. The resulting training and test errors are shown in \cref{fig:car-error-epoch}. The small variation across runs indicates that the observed performance is not highly sensitive to random initialization. 

\begin{figure}[htbp]
     \centering
     \includegraphics[width=0.6\textwidth]{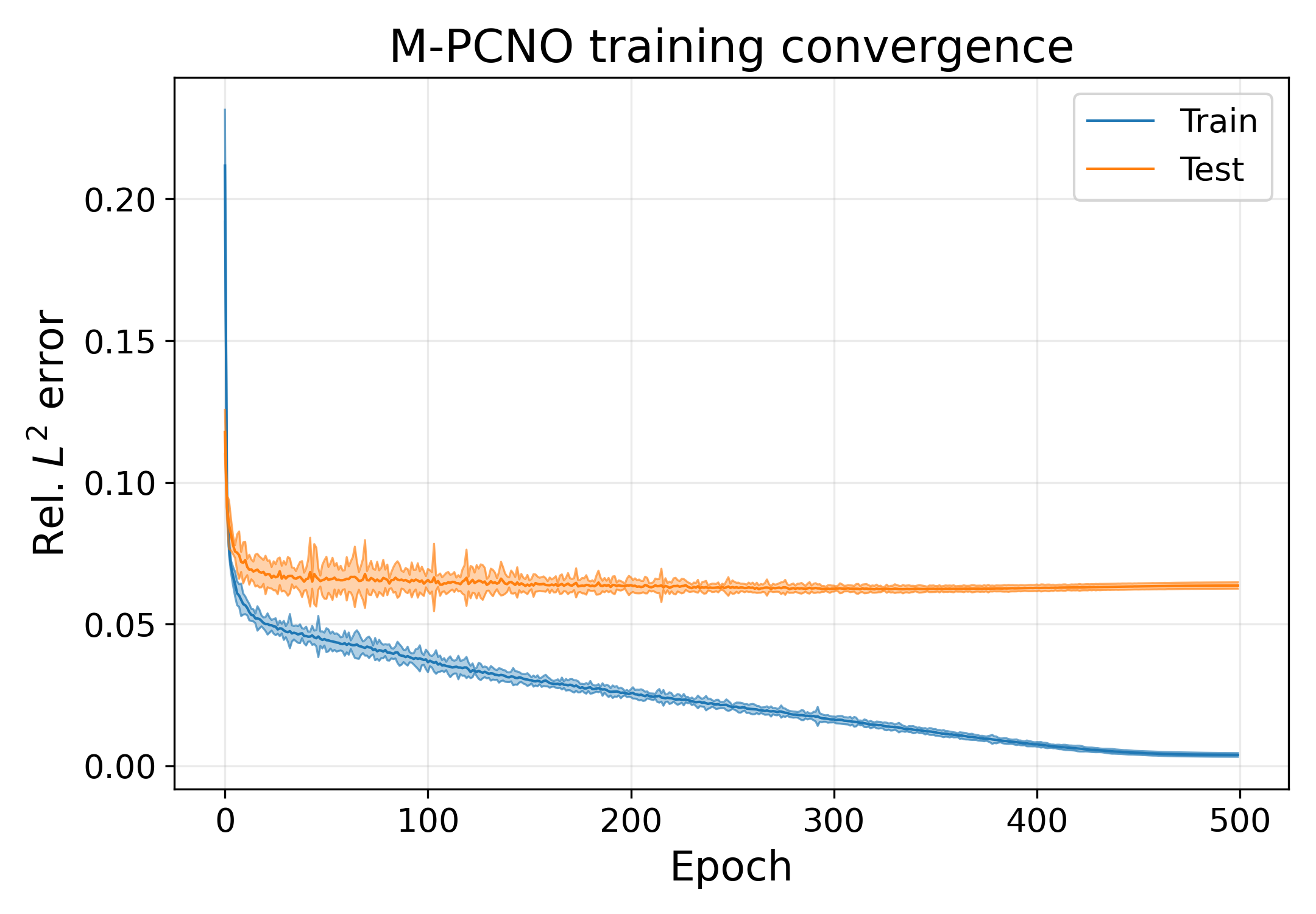}\\
     \caption{Training and test errors versus the number of epochs for the turbulent flow problem. The curves show the mean over
    five runs with different random seeds, and the error bars indicate three standard deviation.}
     \label{fig:car-error-epoch}
\end{figure}

\bibliographystyle{siamplain}
\bibliography{ref}
\end{document}